\documentclass[10pt, reqno]{amsart}
\usepackage{amssymb}
\usepackage{amsmath,amscd}
\usepackage[initials,nobysame,alphabetic]{amsrefs}
\usepackage{amsmath, amssymb}
\usepackage{amsfonts}
\usepackage{mathrsfs}
\usepackage{mathpazo}
\usepackage[arrow,matrix,curve,cmtip,ps]{xy}
\usepackage[margin=1.3in]{geometry}
\usepackage{amsthm}
\usepackage{float}
\usepackage{hyperref}
\usepackage{graphics}
\usepackage{graphicx}
\usepackage{verbatim}
\usepackage{multirow}
\usepackage{tikz}
\usepackage{enumerate}
\interfootnotelinepenalty=10000
\allowdisplaybreaks
\geometry{left=2.5cm,right=2.5cm,top=2.0cm,bottom=2cm}
\newtheorem{theorem}{Theorem}[section]
\newtheorem{lemma}[theorem]{Lemma}
\newtheorem{proposition}[theorem]{Proposition}
\newtheorem{corollary}[theorem]{Corollary}

\newtheorem{definition}[theorem]{Definition}

\newtheorem{remark}[theorem]{Remark}

\DeclareMathOperator*{\esssup}{ess\,sup}
\DeclareMathOperator*{\essinf}{ess\,inf}
\numberwithin{equation}{section}

\newcommand{\E}{\mathbb{E}}
\newcommand{\R}{\mathbb{R}}

\def \nn {\nonumber}
\def \cS {\mathcal{S}}
\def \cH {\mathcal{H}}
\def \cP {\mathcal{P}}
\def \bP {\mathbb{P}}
\def\bF {\mathbb{ F}}
\def\cF {\mathcal{ F}}

\def\cD{\mathcal{ D}}
\def\cL{\mathcal{L}}
\def \gm {\gamma}
\def \bt{\beta}
\def \time{[0,T]}

\def \cT{\mathcal{T}}

\def \dis{\displaystyle}
\def \p {\mathbb{P}}
\def \t {\tau}
\def \h{\eta}
\def \th {\theta}
\def \nd{\noindent}
\def \ms{\medskip}
\def \bs{\bigskip}
\def \rw{\rightarrow}
\def\ft {\infty}
\def \lb{\label}

\def \nn {\nonumber}

\def \mx{\mbox}
\def \rwft {\rightarrow \infty}
\def \ed {\end{document}}
\def \og {\omega}
\def \s{\sigma}
\def \ca {\mathcal{A}}

\def \tt {t\le T}
\title[Mean-field Doubly Reflected backward SDEs and  stochastic differential games]{Mean-field Doubly Reflected backward stochastic differential equations}
\author{Yinggu Chen, Said Hamad\`ene and Tingshu Mu}
\address{Department of Mathematics, Shandong University, Jinan, Shandong Province, China}
\email{272564198@qq.com}
\address{Le Mans University, LMM \\ Avenue Olivier Messiaen \\ 72085 Le Mans, Cedex 9, France}
\email{ hamadene@univ-lemans.fr}
\address{Le Mans University, LMM \\ Avenue Olivier Messiaen \\ 72085 Le Mans, Cedex 9, France}
\email{ tingshu.mu.etu@univ-lemans.fr}
\begin{document}
\maketitle
\date{\today}
\begin{abstract} We study mean-field doubly reflected BSDEs. First, using the fixed point method, we show existence and uniqueness of the solution when the data which define the BSDE are $p$-integrable with $p=1$ or $p>1$. The two cases are treated separately. Next by penalization we show also the existence of the solution. The two methods do not cover the same set of assumptions.
\end{abstract}
\def \sp {\cS^p}
\def \udl{\underline}
\def \hlcd {\cH^{d}_{loc}}
\def \d{\delta}
\def \tl {\tilde}
\def \td {[T-\d,T]}
\def \tdd {[T-2\d,T-\d]}
\ms

\keywords{Keywords: Mean-field; Reflected backward SDEs; Dynkin game; Penalization.}\par
AMS subject classification: 49N80; 91A16; 91G66.
\section{Introduction}
In this paper we are concerned with the problem of existence and uniqueness of a solution of the doubly reflected BSDE of the following type: For any $t\le T$,
\begin{equation}\label{MFDRBSDE_intro}
\left\{
\begin{aligned}
&Y_t = \xi + \int_t^T f(s, Y_s, \E[Y_s],Z_s)ds +K^+_T-K^+_t - K^-_T +K^-_t -\int_t^T Z_s dB_s\,;\\
&h( t,\og,Y_t, \E[Y_t]) \leq Y_t \leq g( t,\og,Y_t, \E[Y_t]);\\
&\int_0^T(Y_s-h(s,Y_s, \E[Y_s]))dK^+_s=\int_0^T(Y_s-g(s, Y_s, \E[Y_s]))dK^-_s=0\,\, (K^\pm \mbox{ are increasing processes }).
\end{aligned}\right.
\end{equation}
It is said associated with the quadruple $(f,\xi,h,g)$. Those BSDEs are of mean-field type because the generator $f$ and the barriers $h$ and $g$ depend on the law of $Y_t$ through its expectation. For simplicity reasons we stick to this framework, however it can be somewhat generalized (see Remark \ref{gene}).

Since the introduction by Lasry and Lions \cite{ll1} of the general mathematical modeling
approach for high-dimensional systems of evolution equations corresponding to
a large number of "agents" (the mean-field model), the interest to the mean-field models grows steadily in connection with several applications. Later standard mean-field BSDEs have been introduced in \cite{bdlp09}. Since then, there have been several papers on mean-field BSDEs including (\cites{beh,blp09,blpr17,cd, mfrbsde2019,li14,pham19,pham16}, etc) in relation with several fields and motivations in mathematics and economics, such as stochastic control, games, mathematical finance,
utility of an agent inside an economy, PDEs, actuary, etc.

Mean-field one barrier reflected BSDEs have been considered first in the paper \cite{li14}. This latter generalizes the work in \cite{blp09} to the reflected framework. Later Briand et al. \cite{beh} have considered another type of one barrier mean-field reflected BSDEs. Actually in \cite{beh}, the reflection of the component $Y$ of the solution holds in expectation. They show existence and uniqueness of the solution when the increasing process, which makes the constraint on $Y$ satisfied, is deterministic. Otherwise the solution is not necessarily unique. The main motivation is the assessment of the risk of a position in a financial market.

In \cite{mfrbsde2019}, Djehiche et al. consider the above problem \eqref{MFDRBSDE_intro} when there is only one reflecting barrier (e.g. take $g\equiv +\infty$). The authors show existence and uniqueness of the solution in several contexts of integrability of the data $(f,\xi,h)$. The methods are the usual ones: Fixed point and penalization. Those methods do not allow for the same set of assumptions. For example, the fixed point method does not allow generators which depend on $z$ while the penalization does, at the price of some additional regularity properties which are not required by the use of the first method.
The main motivation for considering such a problem comes from actuary and namely the assessment of the prospective reserve of a representative contract in life-insurance (see e.g. \cite{mfrbsde2019} for more details). Later, there have been several works on this subject including (\cites{dddb,ddz,hmw}).

In this paper we consider the extension of the framework of \cite{mfrbsde2019} to the case of two reflecting barriers. We first show existence and uniqueness of a solution of \eqref{MFDRBSDE_intro} by the fixed point method. We deal with the case when the data of the problem are only integrable or $p$-integrable with $p>1$. Those cases are treated separately because one cannot deduce one of them from the other one.
The generator $f$ does not depend on $z$ while the main requirement on $h$ and $g$ is only to be Lipschitz continuous with small enough Lipschitz constants (see condition \eqref{cd1}).

Later on, we use the penalization method to show the existence of a solution for \eqref{MFDRBSDE_intro} under an adaptation to our framework, of the well-known  Mokobodski condition (see e.g. \cites{ck96,hl00,bhm}, etc.) which plays an important role. Within this method, $f$ may depend on $z$ while the Lipschitz property of $h$ and $g$ is replaced with a monotonicity one. As a by-product, we provide a procedure to approximate the solution of \eqref{MFDRBSDE_intro} by a sequence of solutions of standard mean-field BSDEs.

The paper is organized as follows: In Section 2, we fix the notations and the frameworks. In Section 3, we deal with the case when $p>1$ and finally with the case $p=1$. Section 4 is devoted to the study of the penalization scheme which we show it is convergent and its limit provides a solution for
\eqref{MFDRBSDE_intro}. The adapted
Mokobodski condition plays an important role since it makes that the approximations of the processes $K^\pm$ have mild increments and do not explode. As a by-product when the solution of \eqref{MFDRBSDE_intro} is unique, this scheme provides a way to approximate this solution.
\section{Notations and formulation of the problems}
\subsection{Notations}
Let $T$ be a fixed positive constant. Let $(\Omega,\mathcal{F},\mathbb{P})$
denote a complete probability space with $B=(B_t)_{t\in[0,T]}$ a \textit{d-}dimensional
Brownian motion whose natural filtration is $(\mathcal{F}^0_t:=\sigma\lbrace{B_s,s\leq t
\rbrace})_{0\leq t\leq T}$. We denote by $\mathbb{F}=(\mathcal{F}_{t})_{0\leq t\leq T}$
the completed filtration of $(\mathcal{F}^0_{t})_{0\leq t\leq T}$ with the $\mathbb{P}$-null
sets of $\mathcal{F}$, then it satisfies the usual conditions. On the other hand, let $\mathcal{P}$ be the $\sigma$-algebra on $[0,T]\times \Omega$ of the $\mathbb{F}$-progressively
measurable sets.\\

For $p\geq 1$ and $0\le s_0<t_0\le T$,  we define the following spaces:
\begin{itemize}
\item $L^p:=\lbrace \xi: \cF_T-\text{measurable radom variable s.t.}\; \E[|\xi|^p]<\infty\rbrace;$
\item $\cH^{p,d}_{loc}:=\lbrace (z_t)_{t\in\time}: \cP-\text{measurable process and}\;\R^d-\text{valued  s.t. }\E\left(\int_0^T|z_s(\omega)|^2ds\right)^{\frac{p}{2}} <\infty\}$.
\item $\cH^{m}_{loc}:=\lbrace (z_t)_{t\in\time}: \cP-\text{measurable process and}\;\R^m-\text{valued  s.t. }\p-a.s. \int_0^T|z_s(\omega)|^2ds <\infty\}$; $\bar z\in \cH^{m}_{loc}([s_0,t_0])$ if $\bar z_r=z_r1_{[s_0,t_0]}(r)$, $dr\otimes d\p$-a.e. with $z\in \cH^{m}_{loc}$.
\item $\cS^p_c:=\lbrace ({y}_t)_{t\in\time}: \;\text{continuous and
$\cP$-measurable process s.t.}\;
\E[\sup_{t\in\time}|y_t|^p]<\infty\rbrace$; $\cS^p_c([s_0,t_0])$ is the space $\cS^p_c$ reduced to the interval $[s_0,t_0]$. If $y\in \cS^p_c([s_0,t_0])$, we denote by
 $\Vert y\Vert_{\cS^p_c([s_0,t_0])}:=\{\E[\sup_{s_0\leq u\leq t_0}|{y}_u|^p]\}^{\frac{1}{p}}$.
\item $\ca:=\lbrace (k_t)_{t\in\time}:\;\text{continuous, $\cP$-measurable and non-decreasing process s.t.}\;  k_0=0 \rbrace$;
 $\ca([s_0,t_0])$ is the space $\ca$ reduced to the interval $[s_0,t_0]$ (with $k_{s_0}=0$);
\item $\cS^2_{ci}:=\lbrace ({y}_t)_{t\in\time}: \;\text{continuous and $\cP$-measurable and non-decreasing process s.t.}\; \E[\sup_{t\in\time}|y_t|^2]<\infty\rbrace$;
\item $\mathcal{T}_t:=\lbrace \tau, \mathbb{F}-\text{stopping time s.t.}\;\bP-a.s.\,\,\, t\le \tau\leq T\rbrace$;
\item $\cD:=\lbrace (\phi)_{t\in\time}: \bF-\text{adapted}, \;\R-\text{valued continuous process s.t.}\;\Vert\phi\Vert_1=\sup_{\tau\in\cT_0}\E[|y_\tau|]<\infty\rbrace$. Note that the normed space $(\cD,\Vert.\Vert_1)$ is complete (e.g. \cite{dlm}, pp.90). We denote by $(\cD([s_0,t_0]),\Vert.\Vert_1)$, the restriction of $\cD$ to the time interval $[s_0,t_0]$. It is a complete metric space when endowed with the norm $\Vert.\Vert_1$ on $[s_0,t_0]$, i.e.,
\[\Vert X\Vert_{1,[s_0,t_0]}:=\sup_{\tau\in\cT_0,\,\,s_0\leq\tau\leq t_0}\E[|X_\tau|]<\infty. \]
\end{itemize}
\subsection{The class of doubly reflected BSDEs }In this paper we aim at finding $\cP$-measurable processes \\$(Y, Z, K^+, K^-)$  which solves the doubly reflected BSDE of mean-field type associated with the generator $f(t,\omega,y,y')$, the terminal condition $\xi$, the lower barrier $h(y,y')$, and the upper barrier $g(y,y')$ when the data are $p$-integrable, in the cases $p>1$ and $p=1$ respectively. The two cases should be considered separately since one cannot deduce one case from another one. So to begin with let us make precise definitions: \begin{definition}
We say that the quaternary of $\cP$-measurable processes $(Y_t, Z_t, K^+_t, K^-_t)_{t\le T}$ is a solution of the mean-field reflected BSDE associated with $(f, \xi, h, g)$ if :
\ms

\nd \underline{Case}: $p>1$
\begin{equation}\label{MFDRBSDE P>1}
\left\{
\begin{aligned}
&Y \in \cS_c^p,\quad Z \in \hlcd \quad and\quad K^+, K^- \in \ca;\\
&Y_t = \xi + \int_t^T f(s, Y_s, \E[Y_s])ds +K^+_T-K^+_t - K^-_T +K^-_t -\int_t^T Z_s dB_s,\quad 0\leq t\leq T;\\
&h( t,Y_t, \E[Y_t]) \leq Y_t \leq g(t, Y_t, \E[Y_t]), \quad \forall t\in [0,T]; \\
&\int_0^T(Y_s-h(s,Y_s, \E[Y_s]))dK^+_s=\int_0^T(Y_s-g(s, Y_s, \E[Y_s]))dK^-_s=0.
\end{aligned}\right.
\end{equation}\par
\nd \udl{Case}: $p=1$,
\begin{equation}\label{MFDRBSDE P=1}
\left\{
\begin{aligned}
&Y \in \cD,\quad  Z \in\hlcd \quad and\quad K^+, K^- \in \ca;\\
&Y_t = \xi + \int_t^T f(s, Y_s, \E[Y_s])ds +K^+_T-K^+_t - K^-_T +K^-_t -\int_t^T Z_s dB_s,\quad 0\leq t\leq T;\\
&h( t,Y_t, \E[Y_t]) \leq Y_t \leq g(t, Y_t, \E[Y_t]), \quad \forall t\in [0,T]; \\
& \int_0^T(Y_s-h(s, Y_s, \E[Y_s]))dK^+_s=\int_0^T(Y_s-g( s,Y_s, \E[Y_s]))dK^-_s=0.\\
\end{aligned}\right.
\end{equation}
\end{definition}

\subsection{Assumptions on $(f, \xi, h, g)$}
We now make precise requirements on the items $(f, \xi, h, g)$ which define the doubly reflected backward stochastic differential equation of mean-field type.
\ms

\noindent \textbf{\underline{Assumption (A1)}}:$\ $\\\\
\noindent(i) The coefficients $f$, $h$, $g$ and $\xi$ satisfy:\par
\quad (a) $f$ does not depend on $z$ and the process $(f(t, 0, 0))_{t\leq T}$ is $\cP$- measurable and such that $\int_0^T|f(t, 0, 0)|dt \in L^p$; \par
\quad (b) $f$ is Lipschitz w.r.t $(y,y')$ uniformly in$(t, \omega)$, i.e., there exists a positive constant $C_f$ such that $\bP$- a.s. for all $t \in [0, T]$, $y_1,y'_1, y_2$ and $y'_2$ elements of $\R$,
\begin{equation}
|f(t,\omega,y_1,y'_1)-f(t,\omega,y_2,y'_2)| \leq C_f (|y_1-y'_1|+|y_2-y'_2|).
\end{equation}\par
\noindent(ii) $h$ and $g$ are  mappings from $[0,T]\times\Omega\times \R^2$ into $\R$ which satisfy: \par
\quad (a) $h$ and $g$ are Lipschitz w.r.t. $(y,y')$, i.e., there exist  pairs of positive constants $(\gm_1, \gm_2)$, $(\bt_1,\bt_2)$ such that for any $t,x, x', y$ and $y'\in\R,$
\begin{equation}\lb{lipgh}
\begin{aligned}
|h(t,\og,x,x')-h(t,\og,y,y')| \leq \gm_1 |x-y|+\gm_2|x'-y'|,\\
|g(t,\og,x,x')-g(t,\og,y,y')| \leq \bt_1 |x-y|+\bt_2|x'-y'|;
\end{aligned}
\end{equation}\par
\quad (b) $\p$-a.s., $h(t,\og,x,x')<g(t,\og,x,x'),$ for any $t, x,x'\in \R$;\par
\quad (c) the processes $(h(t,\og,0,0))_{t\le T}$ and $(g(t,\og,0,0))_{t\le T}$ belong to $\cS_c^p$
 (when $p>1$) and are continuous of class  $\cD$ (when $p=1$).\par
\noindent(iii) $\xi$ is an $\cF_T$- measurable, $\R$-valued r.v., $\E[|\xi|^p]<\infty$ and $\p$-a.s., $h(T,\xi, \E[\xi])\leq \xi \leq g(T,\xi, \E[\xi])$.
\section{Existence and Uniqueness of a Solution of the Doubly Reflected BSDE of Mean-Field type}
\par
Let $Y=(Y_t)_{t\le T}$ be an $\R$-valued, $\cP$-measurable process and $\Phi$ the mapping  that associates to $Y$ the following process  $(\Phi(Y)_t)_{t\le T}$: $\forall t\le T$,
$$
\begin{aligned}
\Phi(Y)_t :&=\esssup_{\tau\geq t} \essinf_{\sigma\geq t}\E\{\int_t^{\sigma\wedge\tau}f(s,Y_s,\E[Y_s])ds +g(\s,Y_\sigma, \E[Y_t]_{t=\sigma})\mathbb{1}_{\{\sigma<\tau\}}\\
&\qquad \qquad \qquad \qquad +h( \t,Y_\tau, \E[Y_t]_{t=\tau})\mathbb{1}_{\{\tau\leq\sigma, \t<T\}}+\xi\mathbb{1}_{\{\tau=\sigma=T\}}|\cF_t\}.
\end{aligned}
$$For the well-posedness of $\Phi(Y)$ one can see e.g. \cite{lepmaing}, Theorem 7.
\ms

The following result is related to some properties of $\Phi(Y)$.
\begin{lemma}\label{lemma31}Assume that assumptions (A1) are satisfied for $p=1$ and $Y\in \cD$. Then the process $\Phi(Y)$ belongs to $\cD$. Moreover there exist processes $(\underbar Z_t)_{t\le T}$ and
$(\underbar A^\pm_t)_{t\le T}$ such that:
\begin{equation}\label{eq:jeux}\left\{\begin{array}{l}
\underbar Z\in \cH^{m}_{loc}; \,\,\underbar A^\pm \in \ca;\\\\
\Phi(Y)_t=\xi+\int_t^Tf(s,Y_s,\E[Y_s])ds+\underbar A^+_T-\underbar A^+_t-\underbar A^-_T+\underbar A^-_t-\int_t^T\underbar Z_sdB_s,\,\,t\le T; \\\\
h(t,Y_t,\E[Y_t])\le \Phi(Y)_t\le g(t,Y_t,\E[Y_t]),\,\,t\le T;\\\\
\int_0^T(\Phi(Y)_t-h(t,Y_t,\E[Y_t]))d
\underbar A^+_t=\int_0^T(\Phi(Y)_t-g(t,Y_t,\E[Y_t]))d
\underbar A^-_t=0.\end{array}\right.
\end{equation}
\end{lemma}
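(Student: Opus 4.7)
The plan is to freeze $Y$ in \eqref{eq:jeux} and reduce the statement to a classical (non-mean-field) doubly reflected BSDE with $L^1$ data and continuous barriers; the identification $\Phi(Y)=\bar Y$, where $\bar Y$ is the first component of that BSDE solution, will then follow from the standard Dynkin-game representation.

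Fix $Y\in\cD$ and introduce the frozen data
\[
\tilde f(s):=f(s,Y_s,\E[Y_s]),\qquad L_s:=h(s,Y_s,\E[Y_s]),\qquad U_s:=g(s,Y_s,\E[Y_s]).
\]
From (A1)(i), the Lipschitz bound $|\tilde f(s)|\le|f(s,0,0)|+C_f(|Y_s|+\E[|Y_s|])$, and the estimate $\sup_{s\le T}\E[|Y_s|]\le\|Y\|_1<\infty$ coming from $Y\in\cD$, one gets $\int_0^T|\tilde f(s)|\,ds\in L^1$. Similarly (A1)(ii)(a),(c) combined with $Y\in\cD$ imply that $L$ and $U$ are continuous processes of class $\cD$; (A1)(ii)(b) gives the strict separation $L_s<U_s$, and (A1)(iii) yields $L_T\le\xi\le U_T$. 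Because the barriers are continuous and strictly separated, the usual Mokobodski-type condition is automatically fulfilled.

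Next, the classical existence theorem for doubly reflected BSDEs with $L^1$-integrable data and continuous barriers (in the spirit of the framework recalled in \cite{lepmaing}) provides a quadruple $(\bar Y,\underbar Z,\underbar A^+,\underbar A^-)$ with $\bar Y\in\cD$, $\underbar Z\in\cH^{m}_{loc}$, $\underbar A^\pm\in\ca$ satisfying the backward equation, the obstacle constraints and the Skorokhod flat-off conditions of \eqref{eq:jeux}. To identify $\bar Y=\Phi(Y)$ one applies It\^o's formula to $\bar Y$ between $t$ and $\sigma\wedge\tau$ for arbitrary $\tau,\sigma\in\cT_t$, and uses the flat-off conditions to obtain both $\bar Y_t\le\esssup_{\tau\ge t}\essinf_{\sigma\ge t}\E[\,\cdot\,|\cF_t]$ and the reverse inequality, equality being achieved with the natural $\varepsilon$-optimal stopping rules associated with the supports of $\underbar A^\pm$.

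The principal obstacle is that we are in the $p=1$ regime: Burkholder--Davis--Gundy estimates on $\underbar Z$ are no longer available, so the class-$\cD$ bound on $\bar Y$ and the integrability of $\underbar A^\pm$ must be propagated from the frozen data $(\tilde f,L,U,\xi)$ by means of Doob-type inequalities in class $\cD$ and optional stopping, exactly along the lines of \cite{lepmaing}. Checking these class-$\cD$ bounds and transferring them cleanly to the solution of the reduced BSDE is where most of the work lies; once this is done, all four items listed in \eqref{eq:jeux} follow immediately.
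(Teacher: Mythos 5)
Your proposal follows essentially the same route as the paper: freeze $Y$, observe that the frozen generator is integrable and that the barriers $(h(t,Y_t,\E[Y_t]))_{t\le T}$, $(g(t,Y_t,\E[Y_t]))_{t\le T}$ are continuous, of class $\cD$ and completely separated, solve the resulting classical doubly reflected BSDE (the paper cites \cite{imen}, Theorem 4.1 and \cite{topo}, Theorem 3.1 for this step), and identify $\Phi(Y)$ with its first component via the zero-sum Dynkin-game representation. The only imprecision is your claim that continuity and strict separation make ``the usual Mokobodski-type condition automatically fulfilled'': complete separation $h<g$ does not imply Mokobodski's condition; rather, the existence results invoked here hold \emph{without} Mokobodski's condition precisely because the continuous barriers are completely separated.
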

\begin{proof}First note that since $Y\in \cD$ and $g,h$ are Lipschitz then the processes $(h(t,Y_t,\E[Y_t]))_{t\le T}$ and $(g(t,Y_t,\E[Y_t]))_{t\le T}$ belong also to $\cD$. Next as $h<g$ then, using a result by \cite{imen}, Theorem 4.1 or
\cite{topo}, Theorem 3.1, there exist $\cP$-measurable processes $(\underbar Y_t)_{t\le T}$,  $(\underbar Z_t)_{t\le T}$ and
$(\underbar A^\pm_t)_{t\le T}$ such that:
$$\left\{\begin{array}{l}
\underbar Y\in \cD;\,\,\underbar Z\in \cH^{m}_{loc}; \underbar A^\pm \in \ca;\\\\
\underbar Y_t=\xi+\int_t^Tf(s,Y_s,\E[Y_s])ds+\underbar A^+_T-\underbar A^+_t-\underbar A^-_T+\underbar A^-_t-\int_t^T\underbar Z_sdB_s,\,\,t\le T; \\\\
h(t,Y_t,\E[Y_t])\le \underbar Y_t\le g(t,Y_t,\E[Y_t]),\,\,t\le T;\\\\
\int_0^T(\underbar Y_t-h(t,Y_t,\E[Y_t]))d
\underbar A^+_t=\int_0^T(\underbar Y_t-g(t,Y_t,\E[Y_t]))d
\underbar A^-_t=0.\end{array}\right.
$$
Let us point out that in \cite{topo}, Theorem 3.1, the result is obtained in the discontinuous framework, namely the obstacles are right continuous with left limits processes. However since in our case the processes $(h(t,Y_t,\E[Y_t]))_{t\le T}$ and $(g(t,Y_t,\E[Y_t]))_{t\le T}$ are continuous then $\underbar Y$ and
$\underbar A^\pm$ are continuous, and the frameworks of \cite{imen} and \cite{topo} are the same (one can see e.g. \cite{lepxu}, pp.60). Finally the process $\underbar Y$ has the following representation as the value of a zero-sum Dynkin game: $\forall t\le T$,
\begin{equation}\label{dynkinlemme}
\begin{aligned}
\underbar Y_t :&=\esssup_{\tau\geq t} \essinf_{\sigma\geq t}\E\{\int_t^{\sigma\wedge\tau}f(s,Y_s,\E[Y_s])ds +g(\s, Y_\sigma, \E[Y_t]_{t=\sigma})\mathbb{1}_{\{\sigma<\tau\}}\\
&\qquad \qquad \qquad \qquad+h(\t, Y_\tau, \E[Y_t]_{t=\tau})\mathbb{1}_{\{\tau\leq\sigma, \t<T\}}+\xi\mathbb{1}_{\{\tau=\sigma=T\}}|\cF_t\}.
\end{aligned}
\end{equation}Therefore $\underbar Y=\Phi(Y)$ and the claim is proved.
\end{proof}
\begin{remark}
Note that we have also for any $t\le T$,
\begin{equation}\label{dynkinlemme2}
\begin{aligned}
\underbar Y_t :&= \essinf_{\sigma\geq t}\esssup_{\tau\geq t}\E\{\int_t^{\sigma\wedge\tau}f(s,Y_s,\E[Y_s])ds +g(\s, Y_\sigma, \E[Y_t]_{t=\sigma})\mathbb{1}_{\{\sigma<\tau\}}\\
&\qquad \qquad \qquad \qquad+h(\t, Y_\tau, \E[Y_t]_{t=\tau})\mathbb{1}_{\{\tau\leq\sigma, \t<T\}}+\xi\mathbb{1}_{\{\tau=\sigma=T\}}|\cF_t\}.
\end{aligned}
\end{equation}
\end{remark}
\subsection{The case $p>1$ }
We will first show that $\Phi$ is well defined from $\sp_c$ to $\sp_c$.
\begin{lemma}
Let$f, h, g$ and $\xi$ satisfy Assumption (A1) for some $p>1$. If $Y \in \cS^p_c$ then $\Phi(Y)  \in \cS^p_c$.
\end{lemma}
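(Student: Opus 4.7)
The strategy is to derive a pointwise bound of the form $|\Phi(Y)_t|\le \E[\eta\,|\,\cF_t]$ for some $L^p$-random variable $\eta$ that does not depend on $t$, and then apply Doob's $L^p$-maximal inequality, which is available precisely because $p>1$ (and is the reason why the cases $p>1$ and $p=1$ must be treated separately). The continuity and $\cP$-measurability of $\Phi(Y)$ will then be obtained by invoking Lemma \ref{lemma31}, after noting that $\sp_c\subset \cD$ and that Assumption (A1) in the form stated for $p>1$ implies the same assumption for $p=1$.

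First, for any $t\in\time$ and any $\tau,\sigma\in\cT_t$, the integrand in the Dynkin representation of $\Phi(Y)_t$ is dominated in absolute value by the $\cF_T$-measurable random variable
\[\eta:=\int_0^T|f(s,Y_s,\E[Y_s])|\,ds+\sup_{s\le T}|h(s,Y_s,\E[Y_s])|+\sup_{s\le T}|g(s,Y_s,\E[Y_s])|+|\xi|,\]
since e.g. $|h(\tau,Y_\tau,\E[Y_t]_{t=\tau})|\le \sup_{s\le T}|h(s,Y_s,\E[Y_s])|$ and an analogous inequality holds for the $g$-term. Consequently each conditional expectation appearing in the definition of $\Phi(Y)_t$ lies in $[-\E[\eta\,|\,\cF_t],\,\E[\eta\,|\,\cF_t]]$, and this two-sided bound is preserved under essinf over $\sigma$ and esssup over $\tau$. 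Hence $|\Phi(Y)_t|\le \E[\eta\,|\,\cF_t]$ for every $t\le T$.

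Next I would check that $\eta\in L^p$. Using the Lipschitz property \eqref{lipgh} of $h$ together with the uniform estimate $|\E[Y_s]|\le \E[\sup_{u\le T}|Y_u|]=:m<\infty$, one gets
\[\sup_{s\le T}|h(s,Y_s,\E[Y_s])|\le \sup_{s\le T}|h(s,0,0)|+\gamma_1\sup_{s\le T}|Y_s|+\gamma_2 m,\]
and similarly for $g$ with constants $(\beta_1,\beta_2)$. By Assumption (A1)(i)(b),
\[\int_0^T|f(s,Y_s,\E[Y_s])|\,ds\le \int_0^T|f(s,0,0)|\,ds+C_fT\sup_{s\le T}|Y_s|+C_fTm.\]
Since $Y\in\sp_c$, $\xi\in L^p$ and $f(\cdot,0,0)$, $h(\cdot,0,0)$, $g(\cdot,0,0)$ satisfy the $p$-integrability required in (A1), every summand above is in $L^p$, hence so is $\eta$. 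Applying Doob's maximal inequality to the martingale $(\E[\eta\,|\,\cF_t])_{t\le T}$ then yields
\[\E\bigl[\sup_{t\le T}|\Phi(Y)_t|^p\bigr]\le \Bigl(\tfrac{p}{p-1}\Bigr)^p\E[\eta^p]<\infty.\]

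Finally, for continuity and $\cP$-measurability, since $Y\in\sp_c\subset \cD$, Lemma \ref{lemma31} identifies $\Phi(Y)$ with the first component $\underbar Y$ of a doubly reflected BSDE associated with the continuous barriers $h(\cdot,Y_\cdot,\E[Y_\cdot])$ and $g(\cdot,Y_\cdot,\E[Y_\cdot])$; this component is continuous and $\cP$-measurable by construction. Combined with the moment estimate, this gives $\Phi(Y)\in\sp_c$. The main obstacle is the verification that the envelope $\eta$ belongs to $L^p$, where the Lipschitz hypotheses on $f$, $h$, $g$ and the integrability built into Assumption (A1) are used in an essential way; the remaining steps are Doob's inequality and an appeal to Lemma \ref{lemma31}.
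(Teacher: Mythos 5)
Your proof is correct and takes essentially the same route as the paper: you dominate $|\Phi(Y)_t|$ by the martingale $\E[\eta\,|\,\cF_t]$ generated by an $L^p$ envelope built from the Lipschitz bounds on $f$, $h$, $g$, apply Doob's $L^p$ maximal inequality (the step that requires $p>1$), and obtain continuity from the doubly reflected BSDE representation of Lemma \ref{lemma31}. The only cosmetic difference is that the paper first shifts to $\Phi(Y)_t+\int_0^t f(s,Y_s,\E[Y_s])\,ds$ and dominates that quantity by a martingale $M$ defined directly from the linearized data, whereas you bound $\Phi(Y)_t$ itself and linearize only when verifying $\eta\in L^p$; both are the same argument.
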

\begin{proof}  Let $Y\in \sp_c$. For $\s$ and $\tau$ two stopping times, let us define:  $$
\begin{aligned}
\cL(\t,\s) &=\int_0^{\t\wedge \s}f(r,Y_r,\E[Y_r])dr +g(\s,Y_\s, \E[Y_t]_{t=\s})\mathbb{1}_{\{\s<\t\}}+h(\t,Y_\t, \E[Y_t]_{t=\t})\mathbb{1}_{\{\t\leq \s, \t<T\}}+\xi\mathbb{1}_{\{\t=\s=T\}}.
\end{aligned}
$$
Then for any $t\le T$,
\begin{equation}\lb{valjeu}
\begin{aligned}
\Phi(Y)_t+\int_0^t f(s, Y_s, \E[Y_s])ds&=\text{ess}\sup_{\tau\geq t}\text{ess}\inf_{\sigma\geq t}\E[\cL(\t,\s)|\cF_t]=\text{ess}\inf_{\sigma\geq t}\text{ess}\sup_{\tau\geq t}\E[\cL(\t,\s)|\cF_t].
\end{aligned}
\end{equation}As pointed out previously when $Y$ belongs to $\sp_c$ with $p>1$, then it belongs to $\cD$. Therefore, under assumptions (A1), the process $\Phi(Y)$ is continuous.
On the other hand, the second equality in \eqref{valjeu} is valid since by (A1)-(ii), (a)-(c), $h<g$ and the processes $(h(s,Y_s, \E[Y_s]))_{s\le T}$ and $(g(s,Y_s, \E[Y_s]))_{s\le T}$ belongs to $\sp_c$ since $Y$ belongs to $\sp_c$ (see e.g.
\cite{ehw}
for more details).
\ms

\nd Next let us define the martingale $M:=(M_t)_{0\leq t\leq T}$ as follows:
\begin{equation}
\begin{aligned}
M_t: &=\E\left\{\int_0^T\left[ |f(s,0,0)| +C_f(|Y_s|+\E|Y_s|)\right]ds+\sup_{s\le T}|g(s,0,0)|+\bt_1 \sup_{s\leq T}|Y_s|+\bt_2\sup_{s\leq T}\E|Y_s|\right.\\
&\left.\qquad\qquad\qquad\qquad+\sup_{s\le T}|h(s,0,0)|+\gm_1\sup_{s\leq T}|Y_s|+\gm_2\sup_{s\leq T}\E|Y_s|+|\xi|\ \ \big|\cF_t\right\}.
\end{aligned}
\end{equation} As $Y$ belongs to $\sp$ and by Assumptions (A1), the term inside the conditional expectation belongs to $L^p(d\p)$. As the filtration $\bF$ is Brownian then $M$ is continuous and by Doob's inequality with $p>1$ (\cite{revuzyor}, pp.54) one deduces that $M$ belongs also to $\sp$.
Next as $f$, $g$ and $h$ are Lipschitz, then by a linearization procedure of those functions one deduces that: $$
\begin{aligned}
|\E[\cL(\t,\s)|\cF_t]|\leq M_t
\end{aligned}
$$
for any $t\leq T$ and any stopping times $\sigma, \tau \in \mathcal{T}_t$. Then we obtain
$$
\begin{aligned}\forall t\le T,\,\,
|\Phi(Y)_t+\int_0^t f(s, Y_s, \E[Y_s])ds|\leq M_t.
\end{aligned}
$$
Therefore,
$$
\begin{aligned}
\E\{\sup_{t\leq T}|\Phi(Y)_t|^p \}\leq C_p \left\{ \E \left( \int_0^T| f(s, Y_s, \E[Y_s])|ds \right)^p +\E[\sup_{t\leq T}|M_t|^p] \right\},
\end{aligned}
$$
where $C_p$ is a positive constant that only depends on $p$ and $T$. It holds that $\Phi(Y)\in \cS_c^p$ since $Y\in \cS_c^p$ and $f$ is Lipschitz.
\end{proof}
Next we have the following result.
\begin{proposition}\label{fix_estimate_p>1}
Let Assumption (A1) holds for some $p>1$. If $\gm_1,\gm_2,\bt_1$ and $\bt_2$ satify
\begin{equation}\lb{cd1}
(\gm_1+\gm_2+\bt_1+\bt_2)^{\frac{p-1}{p}} \left[\left(\frac{p}{p-1}\right)^p(\gm_1+\bt_1)+(\gm_2+\bt_2)\right]^{\frac{1}{p}}<1,
\end{equation}
then there exists $\delta >0$ depending only on $p, C_f, \gm_1, \gm_2, \bt_1$ and $\bt_2$ such that $\Phi$ is a contraction on the time interval $[T-\delta, T]$.
\end{proposition}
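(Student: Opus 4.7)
The plan is to estimate $|\Phi(Y)_t-\Phi(Y')_t|$ pointwise via the Dynkin-game representation \eqref{valjeu}, then pass to $L^p$-norms using a convex-combination inequality together with Doob's maximal inequality, so that the Doob factor amplifies only the ``random'' part of the upper bound and not the ``deterministic'' part.

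Concretely, for $Y,Y'\in\sp_c([T-\delta,T])$ let $\cL_Y(\tau,\sigma)$ denote the payoff appearing in \eqref{valjeu}. Using $|\esssup\essinf a-\esssup\essinf b|\le\esssup|a-b|$, the Lipschitz bounds on $f,g,h$, the fact that the indicators of $\{\sigma<\tau\}$ and $\{\tau\le\sigma,\tau<T\}$ are disjoint, and the cancellation of the $\xi$-term, I would obtain
\[
|\Phi(Y)_t-\Phi(Y')_t|\le \E\bigl[c_1 U+c_2 V\,\big|\,\cF_t\bigr],\qquad t\in[T-\delta,T],
\]
where $U:=\sup_{s\in[T-\delta,T]}|Y_s-Y'_s|$ is random, $V:=\sup_{s\in[T-\delta,T]}\E|Y_s-Y'_s|$ is deterministic, and $c_1=\gm_1+\bt_1+C_f\delta$, $c_2=\gm_2+\bt_2+C_f\delta$. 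Because $V$ is a constant, $\sup_t|\Phi(Y)_t-\Phi(Y')_t|\le c_1\sup_t\E[U|\cF_t]+c_2 V$.

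The decisive step is the elementary inequality
\[
(c_1 a+c_2 b)^p\le (c_1+c_2)^{p-1}\bigl(c_1 a^p+c_2 b^p\bigr),\qquad a,b\ge 0,
\]
which is Jensen for $x\mapsto x^p$ with weights $c_i/(c_1+c_2)$. Applying it with $a=\sup_t\E[U|\cF_t]$ and $b=V$, then taking expectations, invoking Doob's $L^p$ inequality on the non-negative martingale $(\E[U|\cF_t])_t$ (which costs the factor $(p/(p-1))^p$), and using the trivial bound $V\le \E[U]\le \|Y-Y'\|_{\sp_c([T-\delta,T])}$, one arrives at
\[
\|\Phi(Y)-\Phi(Y')\|_{\sp_c([T-\delta,T])}^{\,p}\le \tilde S^{\,p-1}\!\left[\Bigl(\tfrac{p}{p-1}\Bigr)^{\!p}c_1+c_2\right]\|Y-Y'\|_{\sp_c([T-\delta,T])}^{\,p},
\]
with $\tilde S=c_1+c_2$. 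As $\delta\downarrow 0$ the bracketed coefficient converges to the $p$-th power of the left side of \eqref{cd1}, which is strictly $<1$ by assumption, so one can pick $\delta>0$ small enough to force a contraction.

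The main obstacle is arriving at precisely \eqref{cd1} rather than a strictly stronger sufficient condition. Applying Minkowski's inequality followed by Doob directly to $c_1 U+c_2 V$ would amplify both $\gm_1+\bt_1$ \emph{and} $\gm_2+\bt_2$ by $p/(p-1)$, yielding the cruder requirement $(p/(p-1))(\gm_1+\bt_1)+(\gm_2+\bt_2)<1$. The convex-combination inequality above is exactly what isolates the random from the deterministic contribution and lets the Doob factor act only on $\gm_1+\bt_1$, producing the asymmetric expression in \eqref{cd1}.
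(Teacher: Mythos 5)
Your proposal is correct and follows essentially the same route as the paper: the Dynkin-game representation, the $\esssup$--$\essinf$ difference bound with the Lipschitz estimates giving the two coefficients $\delta C_f+\gm_1+\bt_1$ and $\delta C_f+\gm_2+\bt_2$, the convexity inequality $(ax_1+bx_2)^p\le (a+b)^{p-1}(ax_1^p+bx_2^p)$ to separate the conditional-expectation term (hit by Doob's $(p/(p-1))^p$) from the deterministic term (handled by Jensen), and finally letting $\delta\downarrow 0$ to recover \eqref{cd1}. The constant you obtain coincides with the paper's $\Lambda(C_f,p,\gm_1,\gm_2,\bt_1,\bt_2)(\delta)$, so nothing is missing.
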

\begin{proof} Let $Y, Y' \in \cS_c^p$. Then, for any $t \leq T$, we have,
\begin{equation}\label{phi1}
\begin{aligned}
&|\Phi(Y)_t-\Phi(Y')_t|\\&=| \esssup_{\tau\geq t}\essinf_{\sigma\geq t} \{\E\left[\int_t^{\sigma\wedge\tau}f(s,Y_s,\E[Y_s])ds +g( \s,Y_\sigma, \E[Y_t]_{t=\sigma})\mathbb{1}_{\{\sigma<\tau\}}\right.\\
&\left.\qquad +h( \t,Y_\tau, \E[Y_t]_{t=\tau})\mathbb{1}_{\{\tau\leq\sigma,\t<T\}}+\xi\mathbb{1}_{\{\tau=\sigma=T\}}|\cF_t\right]\}- \esssup_{\tau\leq t}\essinf_{\sigma\leq t}\{\E\left[\int_t^{\sigma\wedge\tau}f(s,Y'_s,\E[Y'_s])ds \right.\\&\qquad +g(\s, Y'_\sigma, \E[Y'_t]_{t=\sigma})\mathbb{1}_{\{\sigma<\tau\}}
\left.+h(\t, Y'_\tau, \E[Y'_t]_{t=\tau})\mathbb{1}_{\{\tau\leq\sigma,\t<T\}}+\xi\mathbb{1}_{\{\tau=\sigma=T\}}|\cF_t\right]\}|\\
&\leq \esssup_{\tau\geq t,\,\sigma\geq t}\E\left\{\int_t^{\sigma\wedge\tau}|f(s,Y_s,\E[Y_s])-f(s,Y'_s,\E[Y'_s])|ds\right.+|g( \s,Y_\sigma, \E[Y_t]_{t=\sigma})\\
&\left.\qquad \qquad -g(\s, Y'_\sigma, \E[Y'_t]_{t=\sigma})|\mathbb{1}_{\{\sigma<\tau\}}+|h(\t, Y_\tau, \E[Y_t]_{t=\tau})-h(\t,Y'_\tau, \E[Y'_t]_{t=\tau})|\mathbb{1}_{\{\tau\leq\sigma,\t<T\}} |\cF_t\right\}\\
&\leq \E\left\{\int_t^T|f(s,Y_s,\E[Y_s])-f(s,Y'_s,\E[Y'_s])|ds+(\bt_1+\gm_1)
\sup_{t\leq s\leq T}|Y_s-Y'_s||\cF_t\right\} \\&\qquad \qquad+(\bt_2+\gm_2)\sup_{t\leq s\leq T}\E[|Y_s-Y'_s|].
\end{aligned}
\end{equation}
Fix now $\delta>0$ and let $t\in [T-\delta,T]$. By the Lipschitz condition of $f$, \eqref{phi1} implies that
\begin{equation}\label{phi2}
\begin{aligned}
&|\Phi(Y)_t-\Phi(Y')_t|\\
&\leq\E\left[\delta C_f\{\sup_{T-\delta\leq s\leq T}|Y_s-Y'_s|+\sup_{T-\delta\leq s\leq T}\E[|Y_s-Y'_s|]\}\right.+(\bt_1+\gm_1)\sup_{T-\delta\leq s\leq T}|Y_s-Y'_s|\\
&\left. +(\bt_2+\gm_2)\sup_{T-\delta\leq s\leq T}\E[|Y_s-Y'_s|] \big|\cF_t\right]\\
&=(\delta C_f+\gm_1+\bt_1)\E\left[\sup_{T-\delta\leq s\leq T}|Y_s-Y'_s| \big| \cF_t\right]+(\delta C_f+\gm_2+\bt_2)\sup_{T-\delta\leq s\leq T}\E\{|Y_s-Y'_s|\}.
\end{aligned}
\end{equation}
As $p>1$, thanks to the convexity inequality
 $(ax_1+bx_2)^p \leq (a+b)^{p-1}(ax_1^p+bx_2^p)$ holding for any non-negative real constants $a, b, x_1$ and $x_2$, \eqref{phi2} yields
\begin{equation}\label{phi3}
\begin{aligned}
|\Phi(Y)_t-\Phi(Y')_t|^p&\leq (2\delta C_f+\gm_1+\gm_2+\bt_1+\bt_2)^{p-1} \left\{ (\delta C_f+\gm_1+\bt_1)\right. \\
&\left.\left(\E[\sup_{T-\delta\leq s\leq T}|Y_s-Y'_s| \big| \cF_t]\right)^p+(\delta C_f+\gm_2+\bt_2)\left(\E[\sup_{T-\delta\leq s\leq T}|Y_s-Y'_s|]\right)^p\right\}.
\end{aligned}
\end{equation}
Next, by taking expectation of the supremum over $t \in [T-\delta, T]$  on the both hand-sides of \eqref{phi3}, we have
 \begin{equation}\label{phi4}
\begin{aligned}
&\E\left[\sup_{T-\delta\leq s\leq T}|\Phi(Y)_s-\Phi(Y')_s|^p\right]\\
&\leq (2\delta C_f+\gm_1+\gm_2+\bt_1+\bt_2)^{p-1} \left\{ (\delta C_f+\gm_1+\bt_1)\right. \E[\left(\sup_{T-\delta\leq t\leq T}\left\{\E[\sup_{T-\delta\leq s\leq T}|Y_s-Y'_s||\cF_t]\right\}\right)^p]\\
&\left.\qquad\qquad +(\delta C_f+\gm_2+\bt_2)\left\{\E[\sup_{T-\delta\leq s\leq T}|Y_s-Y'_s|]\right\}^p\right\}.
\end{aligned}
\end{equation}
By applying Doob's inequality we have:
\begin{equation}\label{phi41}
\begin{aligned}\E[\left(\sup_{T-\delta\leq t\leq T}\left\{\E[\sup_{T-\delta\leq s\leq T}|Y_s-Y'_s||\cF_t]\right\}\right)^p]\le (\frac{p}{p-1})^p  \E\left[\sup_{T-\delta\leq s\leq T}|Y_s-Y'_s|^p\right]
\end{aligned}
\end{equation}
and by Jensen's one we have also
 \begin{equation}\label{phi51}
\left\{\E[\sup_{T-\delta\leq s\leq T}|Y_s-Y'_s|]\right\}^p\le
\E[\sup_{T-\delta\leq s\leq T}|Y_s-Y'_s|^p].
\end{equation}Plug now \eqref{phi41} and \eqref{phi51} in \eqref{phi4} to obtain:
 $$
 \|\Phi(Y)-\Phi(Y')\|_{\cS^p_c {([T-\delta, T])}} \leq \Lambda(C_f, p, \gm_1, \gm_2, \bt_1, \bt_2)(\d)\|Y-Y'\|_{\cS_c^p([T-\delta, T])}
$$
 where
\begin{align*}
\Lambda(C_f, p, \gm_1, \gm_2, \bt_1, \bt_2)(\d)&=(2 \delta C_f+\gm_1+\gm_2+\bt_1+\bt_2)^{\frac{p-1}{p}} \left[\left(\frac{p}{p-1}\right)^p(\delta C_f+\gm_1+\bt_1)\right.\\
&\left.+(\delta C_f+\gm_2+\bt_2)\right]^\frac{1}{p}.
\end{align*}
 Note that \eqref{cd1} is just $\Lambda(C_f, p, \gm_1, \gm_2, \bt_1, \bt_2) (0)<1$. Now as \\$\lim_{\d\rw 0} \Lambda(C_f, p, \gm_1, \gm_2, \bt_1, \bt_2) (\d)=\Lambda(C_f, p, \gm_1, \gm_2, \bt_1, \bt_2) (0)<1$, then there exists $\delta$ small enough which depends only on $C_f, p, \gm_1, \gm_2, \bt_1, \bt_2$ and not on $\xi$ nor $T$ such that $ \Lambda(C_f, p, \gm_1, \gm_2, \bt_1, \bt_2) (\d)<1$. It implies that $\Phi$ is a contraction on ${\cS_c^p([T-\delta, T])}$. Then there exists a process which belongs to ${\cS_c^p([T-\delta, T])}$ such that
 $$Y_t=\Phi(Y)_t, \,\,\forall t\in [T-\delta, T].$$
 \end{proof}
\nd We now show that the mean-field reflected BSDE \eqref{MFDRBSDE P>1} has a unique solution.
\begin{theorem}\label{fix_uniq}
Assume that Assumption (A1) holds for some $p>1$. If $\gm_1$ and $\gm_2$ satisfy
 \begin{equation}\label{fix_uniq1}
(\gm_1+\gm_2+\bt_1+\bt_2)^{\frac{p-1}{p}} \left[\left(\frac{p}{p-1}\right)^p(\gm_1+\bt_1)+(\gm_2+\bt_2)\right]^{\frac{1}{p}}<1,
\end{equation}
 then the mean-field doubly reflected BSDE \eqref{MFDRBSDE P>1} has a unique solution $(Y, Z, K^+,K^-)$.
\end{theorem}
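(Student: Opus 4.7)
The plan is to prove both existence and uniqueness on the whole interval $[0,T]$ by concatenating solutions obtained on successive backward subintervals of length $\delta$, where $\delta>0$ is the constant furnished by Proposition \ref{fix_estimate_p>1}. The decisive feature of that proposition is that $\delta$ depends only on $p,C_f,\gamma_1,\gamma_2,\beta_1,\beta_2$, and neither on $T$ nor on the terminal condition; this is what allows the backward induction to go through with the same step size at every stage.

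For existence, I would first apply the Banach fixed point theorem to the contraction $\Phi$ on $\sp_c([T-\delta,T])$ with terminal datum $\xi$, obtaining a process $Y^{(1)}$ with $Y^{(1)}=\Phi(Y^{(1)})$ on $[T-\delta,T]$. Since $Y^{(1)}\in\sp_c\subset\cD$ and, by the Lipschitz property of $h,g$ together with (A1)(ii)(c), the barriers $h(\cdot,Y^{(1)},\E[Y^{(1)}])$ and $g(\cdot,Y^{(1)},\E[Y^{(1)}])$ lie in $\sp_c$, I would invoke the doubly reflected BSDE representation used in Lemma \ref{lemma31} (in its $\sp$ version, obtained from the standard $\sp$ a priori estimates for classical doubly reflected BSDEs with two $\sp_c$ barriers and the exogenous driver $f(\cdot,Y^{(1)},\E[Y^{(1)}])$) to produce $(Z^{(1)},K^{+,(1)},K^{-,(1)})$ with $Z^{(1)}\in\hlcd$ and $K^{\pm,(1)}\in\ca$, so that the quadruple solves system \eqref{MFDRBSDE P>1} on $[T-\delta,T]$. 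I would then iterate backwards on $[T-2\delta,T-\delta]$, taking $Y^{(1)}_{T-\delta}$ as the new terminal datum, which automatically satisfies the reflection inequality at time $T-\delta$ by construction. Using the same $\delta$ at every step, after $\lceil T/\delta\rceil$ iterations the pieces glue into a solution on $[0,T]$. The technical point to be careful about is precisely this upgrade of Lemma \ref{lemma31} to the $\sp$-framework: one either reruns the standard $\sp$ a priori estimates for classical doubly reflected BSDEs, or invokes a $p$-integrable version of the representation theorem from \cite{topo,imen}.

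For uniqueness, let $(Y,Z,K^+,K^-)$ and $(Y',Z',K'^+,K'^-)$ be two solutions of \eqref{MFDRBSDE P>1}. Freezing the coefficient $f(\cdot,Y,\E[Y])$ and the barriers $h(\cdot,Y,\E[Y])$, $g(\cdot,Y,\E[Y])$, the first quadruple solves a \emph{classical} doubly reflected BSDE, so by the Dynkin game characterisation used in the proof of Lemma \ref{lemma31} (equation \eqref{dynkinlemme}) its $Y$-component must coincide with $\Phi(Y)$; similarly $Y'=\Phi(Y')$. Since $\Phi$ is a contraction on $\sp_c([T-\delta,T])$ by Proposition \ref{fix_estimate_p>1}, its fixed point there is unique, whence $Y=Y'$ on $[T-\delta,T]$. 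Subtracting the two BSDEs and using the Brownian martingale representation theorem then yields $Z=Z'$, so $K^+-K^-=K'^+-K'^-$. The strict inequality $h<g$ from (A1)(ii)(b) guarantees that on $\{Y_s=h(s,Y_s,\E[Y_s])\}$ one has $Y_s<g(s,Y_s,\E[Y_s])$ and vice versa, so the Skorokhod conditions force the supports of $dK^+-dK'^+$ and $dK^--dK'^-$ to be disjoint; combined with $K^+-K^-=K'^+-K'^-$ this yields $K^+=K'^+$ and $K^-=K'^-$ separately on $[T-\delta,T]$. Iterating this uniqueness argument backwards interval by interval extends it to $[0,T]$, completing the proof.
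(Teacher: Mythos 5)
Your proposal is correct and follows essentially the same route as the paper: Banach fixed point for $\Phi$ on $\cS^p_c([T-\delta,T])$, the classical doubly reflected BSDE representation with frozen coefficients (the paper cites \cite{ehw} for the $\cS^p$ version you flag as the technical point) to recover $(Z,K^\pm)$, backward concatenation using the fact that $\delta$ is independent of $\xi$ and $T$, and uniqueness via the Dynkin-game identification of any solution's $Y$-component with a fixed point of $\Phi$, followed by $Z$ from the martingale part and $K^\pm$ from $h<g$. Your explicit justification of the separate identification of $K^+$ and $K^-$ via disjoint supports is slightly more detailed than the paper's, but the argument is the same.
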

\begin{proof}
Let $\d$ be as in Proposition \ref{fix_estimate_p>1}. Then there exists a process $Y\in {\cS_c^p([T-\delta, T])}$, which is the fixed point of $\Phi$ in this latter space and verifies:
For any $t\in [T-\delta,T]$,
$$
\begin{aligned}
Y_t &=\esssup_{\tau\geq t}\essinf_{\sigma\geq t} \E\left\{\int_t^{\sigma\wedge\tau}f(s,Y_s,\E[Y_s])ds +g( \s,Y_\sigma, \E[Y_t]_{t=\sigma})\mathbb{1}_{\{\sigma<\tau\}}\right.\\
&\left.\qquad\qquad\qquad\qquad\qquad+h(\t, Y_\tau, \E[Y_t]_{t=\tau})\mathbb{1}_{\{\tau\leq\sigma,\t<T\}}+\xi\mathbb{1}_{\{\tau=\sigma=T\}}|\cF_t\right\}.
\end{aligned}
$$
Next since $\xi \in L^p(d\p)$, $\E[(\int_0^T|f(s,\omega, Y_s,\E[Y_s])|ds)^p]<\infty$, the processes $(h(t,Y_t,\E[Y_t]))_{T-\d\le t\le T}$ and \\
$(g(t,Y_t,\E[Y_t]))_{T-\d\le t\le T}$ belong to ${\cS_c^p([T-\delta, T])}$
since $Y$ is so, and finally since $h<g$, then there exist processes $\bar Y\in {\cS_c^p([T-\delta, T])}$, $\bar Z\in \hlcd([T-\d,T])$ and $\bar K^{\pm}\in \ca ([T-\d,T])$ (see e.g. \cite{ehw} for more details) such that for any $t\in  [T-\d,T]$, it holds:
\begin{equation}\label{MFloc}
\left\{
\begin{aligned}
&\bar Y_t = \xi + \int_t^T f(s, Y_s, \E[Y_s])ds +\bar  K^+_T-\bar K^+_t - \bar K^-_T +\bar K^-_t -\int_t^T \bar Z_s dB_s;\\
&h(t,Y_t, \E[Y_t]) \leq \bar Y_t \leq g(t,Y_t, \E[Y_t]); \\
& \int_{T-\d}^T(\bar Y_s-h(s,Y_s, \E[Y_s]))d\bar K^+_s=0, \int_{T-\d}^T(\bar Y_s-g(s, Y_s, \E[Y_s]))d\bar K^-_s=0.
\end{aligned}\right.
\end{equation}
Therefore the process $\bar Y$ has the following representation: $\forall t\in [T-\d,T]$,
\begin{equation}\label{fix_con_value2}
\begin{aligned}
\bar Y_t &=\esssup_{\tau\geq t}\essinf_{\sigma\geq t} \E\left\{\int_t^{\sigma\wedge\tau}f(s,Y_s,\E[Y_s])ds +g(\s, Y_\sigma, \E[Y_t]_{t=\sigma})\mathbb{1}_{\{\sigma<\tau\}}\right.\\
&\left.\qquad\qquad\qquad\qquad\qquad+h( \t,Y_\tau, \E[Y_t]_{t=\tau})\mathbb{1}_{\{\tau\leq\sigma,\t<T\}}+\xi\mathbb{1}_{\{\tau=\sigma=T\}}|\cF_t\right\}.
\end{aligned}
\end{equation}
It follows that for any $t\in [T-\d,T]$, $Y_t=\bar Y_t$. Thus $(Y,\bar Z, \bar K^{\pm})$ verifies \eqref{MFDRBSDE P>1} and \eqref{MFloc} on $[T-\d,T]$, i.e., for $t\in [T-\d,T]$
\begin{equation}\label{MFloc2}
\left\{
\begin{aligned}
& Y_t = \xi + \int_t^T f(s, Y_s, \E[Y_s])ds +\bar  K^+_T-\bar K^+_t - \bar K^-_T +\bar K^-_t -\int_t^T \bar Z_s dB_s;\\
&h( t,Y_t, \E[Y_t]) \leq Y_t \leq g(t,Y_t, \E[Y_t]); \\
& \int_{T-\d}^T( Y_s-h(s,Y_s, \E[Y_s]))d\bar K^+_s=0, \int_{T-\d}^T( Y_s-g(s, Y_s, \E[Y_s]))d\bar K^-_s=0.
\end{aligned}\right.
\end{equation}
But $\d$ of Proposition \eqref{fix_estimate_p>1} does not depend on the terminal condition $\xi$ nor on $T$, therefore there exists another process $Y^1$ which is a fixed point of $\Phi$ in ${\cS_c^p([T-2\delta, T-\delta])}$ with terminal condition $Y_{T-\d}$, i.e., for any $t\in [T-2\delta, T-\delta]$,
\begin{equation}\label{fix_con_value2x}
\begin{aligned}
Y^1_t &=\esssup_{\tau\in [t,T-\d]}\essinf_{\sigma\in [t,T-\d]} \E\left\{\int_t^{\sigma\wedge\tau}f(s,Y^1_s,\E[Y^1_s])ds +g(\s, Y^1_\sigma, \E[Y^1_t]_{t=\sigma})\mathbb{1}_{\{\sigma<\tau\}}\right.\\
&\left.\qquad\qquad\qquad+h(\t,Y^1_\tau, \E[Y^1_t]_{t=\tau})\mathbb{1}_{\{\tau\leq\sigma,\t<T-\d\}}+Y_{T-\d}\mathbb{1}_{\{\tau=\sigma=T-\d\}}|\cF_t\right\}.
\end{aligned}
\end{equation}
Then as previously, there exist processes $(\bar Z^1, \bar K^{1,\pm})$ ($\bar K^{1,\pm}\in \ca([T-2\d,T-\d]))$ such that $(Y^1,\bar Z^1, \bar K^{1,\pm})$ verify: For any $t\in [T-2\d,T-\d]$,
\begin{equation}\label{MFloc3}
\left\{
\begin{aligned}
& Y^1_t = Y_{T-\d} + \int_t^{T-\d}  f(s, Y^1_s, \E[Y^1_s])ds +\bar  K^{1,+}_{T-\d} -\bar K^{1,+}_t - \bar K^{1,-}_{T-\d}  +\bar K^{1,-}_t -\int_t^{T-\d}  \bar Z^1_s dB_s;\\
&h(t, Y^1_t, \E[Y^1_t]) \leq Y^1_t \leq g(t,Y^1_t, \E[Y^1_t]); \\
& \int_{T-2\d}^{T-\d} ( Y^1_s-h(Y^1_s, \E[Y^1_s]))d\bar K^{1,+}_s=0, \int_{T-2\d}^{T-\d} ( Y^1_s-g( Y^1_s, \E[Y^1_s]))d\bar K^{1,-}_s=0.
\end{aligned}\right.
\end{equation}
Concatenating now the solutions $(Y,\bar Z, \bar K^{\pm})$ and $(Y^1,\bar Z^1, \bar K^{1,\pm})$ we obtain a solution of \eqref{MFDRBSDE P>1} on
$[T-2\d,T]$. Actually for $t\in[T-2\delta,T]$, let us set:
\begin{align*}
&\tl Y_t=Y_t1_{[T-\delta,T]}(t)+{Y}^1_t1_{[T-2\delta,T-\delta)}(t),\\
&\tl Z_t=\bar Z_t1_{[T-\delta,T]}(t)+\bar{Z}^1_t1_{[T-2\delta,T-\delta)}(t),\\
&\dis\int_{T-2\delta}^td\tl K_t^{1,\pm}=\dis\int_{T-2\delta}^t\lbrace 1_{[T-\delta,T]}(s)d\bar K_s^{0,\pm}+1_{[T-2\delta,T-\delta]}(s)d\bar{K}_s^{1,\pm} \rbrace.
\end{align*}Then $\tl Y\in {\cS_c^p([T-2\delta, T]}$, $\tl Z\in \hlcd([T-2\d,T])$ and $\tl K^{\pm}\in \ca ([T-2\d,T])$ and they verify: For any $t\in [T-2\delta, T]$,
\begin{equation}\label{MFDRBSDE P>1x}
\left\{
\begin{aligned}&\tl Y_t = \xi + \int_t^T f(s, \tl Y_s, \E[\tl Y_s])ds +\tl K^+_T-\tl K^+_t - \tl K^-_T +\tl K^-_t -\int_t^T \tl Z_s dB_s;\\
&h(t, \tl Y_t, \E[\tl Y_t]) \leq \tl Y_t \leq g(t,\tl Y_t, \E[\tl Y_t]); \\
&\int_{T-2\d}^T(\tl Y_s-h(s,\tl Y_s, \E[\tl Y_s]))d\tl K^+_s=0 \mx{ and } \int_{T-2\d}^T(\tl Y_s-g(s, \tl Y_s, \E[\tl Y_s]))d\tl K^-_s=0.
\end{aligned}\right.
\end{equation}
But we can do the same on $[T-3\d,T-2\d]$, $[T-4\d,T-3\d]$, etc. and at the end, by concatenation of those solutions, we obtain a solution $(Y,Z,K^{\pm})$ which satisfies \eqref{MFDRBSDE P>1}.

Let us now focus on uniqueness. Assume there is another solution $(\underbar Y,\underbar Z,\underbar K^{\pm})$ of \eqref{MFDRBSDE P>1}. It means that $\underbar Y$ is a fixed point of $\Phi$ on
${\cS_c^p([T-\delta, T])}$, therefore for any $t\in [T-\d,T]$, $Y_t=\underbar Y_t$.  Next writing equation \eqref{MFDRBSDE P>1} for $Y$ and $\underbar Y$ on $[T-2\d,T-\d]$, using the link with zeros-sum Dynkin games (see Lemma \ref{lemma31}) and finally the uniqueness of the fixed point of $\Phi$ on ${\cS_c^p([T-2\delta, T-\d])}$ to obtain that for any $t\in [T-2\d,T-\d]$, $Y_t=\underbar Y_t$.  By continuing this procedure on $[T-3\d,T-2\d]$, $[T-4\d,T-3\d]$, etc. we obtain that $Y=\underline Y$. The equality between the stochastic integrals implies that $Z=\underline Z$. Finally as $h<g$ and since $Y=\underbar Y$, then $K^+=\underbar K^+$ and $K^-=\underbar K^-$ (see e.g. \cite{ehw}) for more details. Thus the solution is unique.
The proof is complete.
\end{proof}
\begin{remark}\label{gene} There is no specific difficulty to consider the following more general framework of equations \eqref{MFDRBSDE P>1} and \eqref{MFDRBSDE P=1}.
$$
\left\{
\begin{aligned}
&Y_t = \xi + \int_t^T f(s, Y_s, \p_{Y_s})ds +K^+_T-K^+_t - K^-_T +K^-_t -\int_t^T Z_s dB_s,\quad 0\leq t\leq T;\\
&h( t,Y_t, \p_{Y_s}) \leq Y_t \leq g( t,Y_t, \p_{Y_s}), \quad \forall t\in [0,T]; \\
&\mbox{ and }\int_0^T(Y_s-h(s,Y_s, \p_{Y_s}))dK^+_s=0, \int_0^T(Y_s-g(s, Y_s, \p_{Y_s}))dK^-_s=0
\end{aligned}\right.
$$
where the Lipschitz property of $f$, $h$ and $g$ w.r.t. $\p_{Y_t}$ should be read as: for $\Psi \in \{f,g,h\}$ for any $\nu,\nu'$ probabilities
$$
|\Psi(\nu)-\Psi(\nu')|\le Cd_p(\nu,\nu')
$$
where $d_p(.,.)$ is the $p$-Wasserstein distance on the subset $\mathcal{P}_p(\R)$ of probability measures with finite $p$-th moment, formulated in terms of a coupling between two random variables $X$ and $Y$ defined on the same probability space:
\begin{equation*}\label{d-coupling}
d_p(\mu,\nu):=\inf\left\{\left(\E\left[|X-Y|^p\right]\right)^{1/p},\,\,\text{law}(X)=\mu,\,\text{law}(Y)=\nu \right\}.\qed
\end{equation*}
\end{remark}
\subsection{The case p=1}${}$\\
We proceed as we did in the case when $p>1$.
We have the following result.
\begin{proposition}\label{contaction p=1}
Let Assumptions (A1) hold for some $p=1$. If $\gm_1,\gm_2,\bt_1$ and $\bt_2$ satisfy
\begin{equation}\label{cdtionsurd}
\gm_1+\gm_2+\bt_1+\bt_2<1,
\end{equation}
then there exists $\delta >0$ depending only on $C_f, \gm_1, \gm_2, \bt_1, \bt_2$ such that $\Phi$ is a contraction on the space $\cD([T-\delta, T])$.
\end{proposition}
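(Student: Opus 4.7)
My plan is to adapt the argument of Proposition \ref{fix_estimate_p>1} to the $p=1$ setting, with the $\cD$-norm replacing the $\cS^p_c$-norm and the Snell-envelope identity $\E[\esssup\E[\cdot\mid\cF_\h]]=\sup\E[\cdot]$ playing the role of Doob's $L^p$ inequality, which is no longer available at $p=1$. That $\Phi$ sends $\cD(\td)$ into itself follows from Lemma \ref{lemma31} together with the observation that $\Phi(Y)_t$ for $t\in\td$ depends on $Y$ only through its restriction to $\td$; the substantive task is the contraction estimate.

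For $Y,Y'\in\cD(\td)$, starting from the Dynkin-value definition of $\Phi$ and using the elementary inequality $|\esssup\essinf A-\esssup\essinf B|\le\esssup_{\t,\s}|A-B|$ together with the Lipschitz hypotheses (A1)(i)(b), (A1)(ii)(a), I would first derive, for every $t\in\td$, the bound
\begin{equation*}
|\Phi(Y)_t-\Phi(Y')_t|\le\esssup_{\t,\s\in\cT_t}\E\bigl[\Delta(\t,\s)\bigm|\cF_t\bigr],
\end{equation*}
where $\Delta(\t,\s)$ collects $\int_t^T|f(s,Y_s,\E[Y_s])-f(s,Y'_s,\E[Y'_s])|\,ds$ together with the $\bt$- and $\gm$-weighted Lipschitz differences of $g$ at $\s$ (under $\mathbf 1_{\{\s<\t\}}$) and of $h$ at $\t$ (under $\mathbf 1_{\{\t\le\s,\,\t<T\}}$). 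Next, for any stopping time $\h\in\td$, the family $\{\E[\Delta(\t,\s)\mid\cF_\h]:\t,\s\in\cT_\h\}$ is directed upwards---given two pairs of stopping times, the pair obtained by pasting along the $\cF_\h$-measurable event where one conditional expectation dominates the other majorises both---so by monotone convergence
\begin{equation*}
\E\bigl|\Phi(Y)_\h-\Phi(Y')_\h\bigr|\le\sup_{\t,\s\in\cT_\h}\E[\Delta(\t,\s)].
\end{equation*}
Since every stopping time $\mu\in\td$ satisfies $\E|Y_\mu-Y'_\mu|\le\|Y-Y'\|_{1,\td}$, and every deterministic $r\in\td$ satisfies $|\E Y_r-\E Y'_r|\le\E|Y_r-Y'_r|\le\|Y-Y'\|_{1,\td}$ (a bound that transfers to the evaluations $\E[Y_r]_{r=\s}$, $\E[Y_r]_{r=\t}$), a direct summation of the three contributions of $\Delta$ yields
\begin{equation*}
\|\Phi(Y)-\Phi(Y')\|_{1,\td}\le\bigl(2C_f\d+\gm_1+\gm_2+\bt_1+\bt_2\bigr)\|Y-Y'\|_{1,\td}.
\end{equation*}

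Condition \eqref{cdtionsurd} says precisely that this coefficient is strictly less than one at $\d=0$; by continuity in $\d$, there exists $\d>0$ depending only on $C_f,\gm_1,\gm_2,\bt_1,\bt_2$ for which it remains $<1$, delivering the contraction on the complete metric space $(\cD(\td),\|\cdot\|_{1,\td})$. The main difficulty, and the reason the argument genuinely departs from the $p>1$ one, is the passage from the conditional Dynkin-value bound to the $\cD$-norm bound: at $p=1$ Doob's maximal inequality fails, and the estimate must be pushed through the Snell-envelope optional-sampling identity, suitably extended to the joint $(\t,\s)$-indexed family via the directed-upwards pasting.
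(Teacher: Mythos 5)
Your proposal follows essentially the same route as the paper: bound $|\Phi(Y)_\theta-\Phi(Y')_\theta|$ at an arbitrary stopping time $\theta\in[T-\delta,T]$ by the $\esssup$ over $(\tau,\sigma)$ of the conditional Lipschitz differences, pass the expectation inside via the directed-upwards property to obtain $\E|\Phi(Y)_\theta-\Phi(Y')_\theta|\le(2\delta C_f+\gm_1+\gm_2+\bt_1+\bt_2)\sup_{\tau}\E|Y_\tau-Y'_\tau|$, and shrink $\delta$ so the coefficient drops below one. The only difference is cosmetic: you make explicit the lattice/pasting argument justifying the exchange of $\E$ and $\esssup$, which the paper uses implicitly.
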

\begin{proof} Let $\d$ be a positive constant and $\th$ a stopping time which belongs to $[T-\d,T]$. Therefore
$$
\begin{aligned}
&|\Phi(Y)_\th-\Phi(Y')_\th|=
| \esssup_{\tau\geq \th}\essinf_{\sigma\geq \th} \{\E\left[\int_\th^{\sigma\wedge\tau}f(s,Y_s,\E[Y_s])ds +g( \s,Y_\sigma, \E[Y_t]_{t=\sigma})\mathbb{1}_{\{\sigma<\tau\}}\right.\\
&\left.\qquad +h(\t, Y_\tau, \E[Y_t]_{t=\tau})\mathbb{1}_{\{\tau\leq\sigma,\t<T\}}+\xi\mathbb{1}_{\{\tau=\sigma=T\}}|\cF_t\right]\}- \esssup_{\tau\geq \th}\essinf_{\sigma\geq \th}\{\E\left[\int_t^{\sigma\wedge\tau}f(s,Y'_s,\E[Y'_s])ds \right.\\&\qquad +g(\s, Y'_\sigma, \E[Y'_t]_{t=\sigma})\mathbb{1}_{\{\sigma<\tau\}}
\left.+h(\t, Y'_\tau, \E[Y'_t]_{t=\tau})\mathbb{1}_{\{\tau\leq\sigma,\t<T\}}+\xi\mathbb{1}_{\{\tau=\sigma=T\}}|\cF_\th\right]\}|
\\&\leq \esssup_{\tau\geq \th}\esssup_{\sigma\geq \th}\E\left\{\int_\th^{\sigma\wedge\tau}|f(s,Y_s,\E[Y_s])-f(s,Y'_s,\E[Y'_s])|ds\right.+|g(\s, Y_\sigma, \E[Y_t]_{t=\sigma})\\
&\left.\qquad-g(\s, Y'_\sigma, \E[Y'_t]_{t=\sigma})|\mathbb{1}_{\{\sigma<\tau\}}+|h( \t,Y_\tau, \E[Y_t]_{t=\tau})-h(\t, Y'_\tau, \E[Y'_t]_{t=\tau})|\mathbb{1}_{\{\tau\leq\sigma,\t<T\}} |\cF_\th\right\}\\&\leq \E\left\{\int_{T-\d}^{T}|f(s,Y_s,\E[Y_s])-f(s,Y'_s,\E[Y'_s])|ds|\cF_\th\right\}
+\esssup_{\sigma\geq \th}\E\{|g(\s, Y_\sigma, \E[Y_t]_{t=\sigma})-g(\s, Y'_\sigma, \E[Y'_t]_{t=\sigma})||\cF_\th\}\\&\qquad +\esssup_{\t\geq \th}\E\{|h(\t,Y_\tau, \E[Y_t]_{t=\tau})-h(\t, Y'_\tau, \E[Y'_t]_{t=\tau})| |\cF_\th\}.
\end{aligned}
$$Taking now expectation on both hand-sides to obtain:
$$
\begin{aligned}
\E[|\Phi(Y)_\th-\Phi(Y')_\th|]&\leq 2\d C_f\sup_{\t \in [T-\d,T]}\E[|Y_\t-Y'_\t]+\sup_{\sigma\geq \th}\E\{|g(\s, Y_\sigma, \E[Y_t]_{t=\sigma})-g(\s, Y'_\sigma, \E[Y'_t]_{t=\sigma})|\}\\&\qquad +\sup_{\t\geq \th}\E\{|h( \t,Y_\tau, \E[Y_t]_{t=\tau})-h(\t, Y'_\tau, \E[Y'_t]_{t=\tau})|\}\\
&\le 2\d C_f\sup_{\t \in [T-\d,T]}\E[|Y_\t-Y'_\t]+\sup_{\sigma\geq \th}\E\{|g(\s, Y_\sigma, \E[Y_t]_{t=\sigma})-g(\s,Y'_\sigma, \E[Y'_t]_{t=\sigma})|\}\\&\qquad+\sup_{\t\geq \th}\E\{|h(\t, Y_\tau, \E[Y_t]_{t=\tau})-h( \t,Y'_\tau, \E[Y'_t]_{t=\tau})|\}.
\end{aligned}
$$
Then for any stopping time $\th$ valued in $[T-\d,T]$, we have:
$$
\E[|\Phi(Y)_\th-\Phi(Y')_\th|]\leq \underbrace{(2\d C_f+\bt_1+\bt_2+\gm_1+\gm_2)}_{\Sigma(\d)}\sup_{\t \in [T-\d,T]}\E[|Y_\t-Y'_\t|].$$

Next since $\bt_1+\bt_2+\gm_1+\gm_2 <1 $, then for $\d$ small enough we have $\Sigma(\d)<1$ ($\d$ depends neither on $\xi$ nor on $T$) and $\Phi$ is a contraction on the space
$\cD([T-\d,T])$. Therefore it has a fixed point $Y$, which then verifies:
\begin{equation}\lb{repy}
\begin{aligned}
&Y\in \cD([T-\d,T]) \mbox{ and }\forall t\in [T-\d,T], \\&\qquad \qquad Y_t=\esssup_{\tau\geq t} \essinf_{\sigma\geq t}\{\E\{\int_t^{\sigma\wedge\tau}f(s,Y_s,\E[Y_s])ds +g(\s, Y_\sigma, \E[Y_t]_{t=\sigma})\mathbb{1}_{\{\sigma<\tau\}}\\
&\qquad \qquad\qquad \qquad +h(\t, Y_\tau, \E[Y_t]_{t=\tau})\mathbb{1}_{\{\tau\leq\sigma, \t<T\}}+\xi\mathbb{1}_{\{\tau=\sigma=T\}}|\cF_t\}\}.
\end{aligned}
\end{equation}\end{proof}
As a by-product we have the following result which stems from the link between the value of a zero-sum Dynkin game and doubly reflected BSDE given in \eqref{eq:jeux}.
\begin{corollary}
Let Assumption (A1) hold for some $p=1$. If $\gm_1,\gm_2,\bt_1$ and $\bt_2$ satisfy \eqref{cdtionsurd} then there exists $\delta >0$, depending only on $C_f, \gm_1, \gm_2, \bt_1, \bt_2$, and $\cP$-measurable processes $Z^0$, $K^{0,\pm}$ such that:
\begin{equation}\label{eq:jeux2}\left\{\begin{array}{l}
\p-a.s.,\,\,\int_{T-\d}^T|Z^0_s|^2ds<\infty;\,\,K^{0,\pm} \in \ca \mbox{ and }K^{0,\pm}_{T-\d}=0;\\\\
Y_t=\xi+\int_t^Tf(s,Y_s,\E[Y_s])ds+K^{0,+}_T-K^{0,+}_t-K^{0,-}_T+K^{0,-}_t-\int_t^T Z^0_sdB_s,\,\,T-\d\le t\le T; \\\\
h(t,Y_t,\E[Y_t])\le Y_t\le g(t,Y_t,\E[Y_t]),\,\,T-\d\le t\le T;\\\\
\int_{T-\d}^T(Y_t-h(t,Y_t,\E[Y_t]))d
K^{0,+}_t=\int_{T-\d}^T(Y_t-g(t,Y_t,\E[Y_t]))d
K^{0,-}_t=0.\end{array}\right.
\end{equation}
\end{corollary}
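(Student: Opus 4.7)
The plan is to derive the corollary as a direct consequence of Proposition \ref{contaction p=1} combined with Lemma \ref{lemma31} applied on the shorter interval $[T-\delta, T]$. Let me sketch the main moves.

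First, I would take $Y$ to be the fixed point of $\Phi$ on $\cD([T-\delta,T])$ produced by Proposition \ref{contaction p=1}, together with the corresponding $\delta$. Since $Y\in \cD([T-\delta,T])$ and the maps $h$ and $g$ are Lipschitz in $(y,y')$ (uniformly in $(t,\omega)$) with $(h(t,\omega,0,0))_{t\le T}$ and $(g(t,\omega,0,0))_{t\le T}$ of class $\cD$ by assumption (A1)-(ii)(c), the obstacle processes $(h(t,Y_t,\E[Y_t]))_{T-\delta\le t\le T}$ and $(g(t,Y_t,\E[Y_t]))_{T-\delta\le t\le T}$ also belong to $\cD([T-\delta,T])$; moreover they are continuous and strictly ordered by (A1)-(ii)(b), and the terminal condition satisfies $h(T,\xi,\E[\xi])\le\xi\le g(T,\xi,\E[\xi])$ by (A1)-(iii). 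This places us exactly in the framework of Lemma \ref{lemma31}, but restricted to the interval $[T-\delta,T]$.

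Next, I would invoke the same existence result (via \cite{imen}, Theorem 4.1 or \cite{topo}, Theorem 3.1) on $[T-\delta,T]$ with generator $s\mapsto f(s,Y_s,\E[Y_s])$, terminal value $\xi$, and obstacles $h(\cdot,Y,\E[Y])$, $g(\cdot,Y,\E[Y])$. This produces $\cP$-measurable processes $(\bar Y_t,\bar Z_t,\bar A^{\pm}_t)_{T-\delta\le t\le T}$ with $\bar Y\in\cD([T-\delta,T])$, $\bar Z\in\cH^{m}_{loc}([T-\delta,T])$, $\bar A^{\pm}\in\ca([T-\delta,T])$, satisfying the doubly reflected BSDE and the Skorokhod minimality conditions on $[T-\delta,T]$. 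By the Dynkin game characterization \eqref{dynkinlemme} established in the proof of Lemma \ref{lemma31}, the first component admits the representation
\[
\bar Y_t=\esssup_{\tau\ge t}\essinf_{\sigma\ge t}\E\Bigl\{\int_t^{\sigma\wedge\tau}f(s,Y_s,\E[Y_s])ds+g(\s,Y_\sigma,\E[Y_t]_{t=\sigma})\mathbb{1}_{\{\sigma<\tau\}}+h(\t,Y_\tau,\E[Y_t]_{t=\tau})\mathbb{1}_{\{\tau\le\sigma,\t<T\}}+\xi\mathbb{1}_{\{\tau=\sigma=T\}}\bigm|\cF_t\Bigr\}
\]
for every $t\in[T-\delta,T]$.

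Finally, comparing this representation with \eqref{repy}, I would conclude that $\bar Y_t=Y_t$ for all $t\in[T-\delta,T]$. It then suffices to set $Z^0:=\bar Z$ and $K^{0,\pm}:=\bar A^{\pm}$ (extended trivially by $0$ before $T-\delta$, so that $K^{0,\pm}_{T-\delta}=0$), and all the conditions listed in \eqref{eq:jeux2} follow from the corresponding conditions satisfied by $(\bar Y,\bar Z,\bar A^{\pm})$.

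The only real delicate point is the applicability of the reflected BSDE existence theorem on $[T-\delta,T]$ with $\cD$-class data: this is precisely what is done in the proof of Lemma \ref{lemma31} on $[0,T]$, and since the argument is purely local in time (continuity of obstacles, $h<g$, integrability of the generator and terminal value over the relevant interval) it transfers to $[T-\delta,T]$ without modification. Everything else is a bookkeeping identification of the fixed point $Y$ with the first component of the reflected BSDE via the Dynkin-game value.
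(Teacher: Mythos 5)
Your proposal is correct and is essentially the paper's own argument: the paper derives this corollary precisely as a by-product of the fixed point from Proposition \ref{contaction p=1} combined with the Dynkin-game/doubly-reflected-BSDE link of Lemma \ref{lemma31} (via \cite{imen} or \cite{topo}) applied on $[T-\delta,T]$, identifying the fixed point $Y$ with the reflected BSDE component through the representation \eqref{repy}. Your handling of the restriction to $[T-\delta,T]$ and the extension of $K^{0,\pm}$ by zero before $T-\delta$ is exactly what is needed.
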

We now give the main result of this subsection.
\begin{theorem}
Let $f, h$, $g$ and $\xi$ satisfying Assumption (A1) for $p=1$. Suppose that
 \begin{equation}\lb{condthm38}
\gm_1+\gm_2+\bt_1+\bt_2<1.
\end{equation}
Then, there exist $\cP$-mesurable processes $(Y,Z,K^\pm)$ unique solution of the mean-field reflected BSDE \eqref{MFDRBSDE P=1}, i.e.,
\begin{equation}\label{MFDRBSDE P=1xx}
\left\{
\begin{aligned}
&Y \in \cD,\quad  Z \in\hlcd \quad and\quad K^+, K^- \in \ca;\\
&Y_t = \xi + \int_t^T f(s, Y_s, \E[Y_s])ds +K^+_T-K^+_t - K^-_T +K^-_t -\int_t^T Z_s dB_s,\quad 0\leq t\leq T;\\
&h( t,Y_t, \E[Y_t]) \leq Y_t \leq g(t, Y_t, \E[Y_t]), \quad \forall t\in [0,T] ;\\
&\int_0^T(Y_s-h( s,Y_s\mx{ \emph{and} } \E[Y_s]))dK^+_s=0, \int_0^T(Y_s-g(s, Y_s, \E[Y_s]))dK^-_s=0.\\
\end{aligned}\right.
\end{equation}
\end{theorem}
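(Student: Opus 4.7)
The plan is to mirror the proof of Theorem \ref{fix_uniq}, replacing $\sp_c$-estimates by the $\cD$-norm. The contraction step has already been done in Proposition \ref{contaction p=1}: under \eqref{condthm38} there exists $\d>0$, depending only on $C_f,\gm_1,\gm_2,\bt_1,\bt_2$ and neither on $\xi$ nor on $T$, such that $\Phi$ is a contraction on $\cD([T-\d,T])$. Let $Y$ be its unique fixed point, which admits the representation \eqref{repy}; the Corollary following Proposition \ref{contaction p=1} (see \eqref{eq:jeux2}) produces $\cP$-measurable processes $(Z^0,K^{0,\pm})$ such that $(Y,Z^0,K^{0,\pm})$ solves the doubly reflected BSDE on $[T-\d,T]$.

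The next step is a backward iteration on the grid $\{T-k\d\}_{k\ge 0}$. Because $Y\in \cD([T-\d,T])$, the random variable $Y_{T-\d}$ is $\cF_{T-\d}$-measurable and integrable; by \eqref{eq:jeux2} it also satisfies the obstacle constraint at $t=T-\d$, so it is an admissible terminal condition for the analogous problem on $[T-2\d,T-\d]$. Since $\d$ depends neither on the terminal data nor on the horizon, re-applying Proposition \ref{contaction p=1} with $Y_{T-\d}$ in place of $\xi$ produces a fixed point $Y^1\in \cD([T-2\d,T-\d])$ and an associated triple $(Y^1,Z^1,K^{1,\pm})$ solving the BSDE on this subinterval. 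Concatenating the two pieces as was done with $(\tl Y,\tl Z,\tl K^\pm)$ in the proof of Theorem \ref{fix_uniq} extends the solution to $[T-2\d,T]$. Iterating at most $\lceil T/\d\rceil$ times covers $[0,T]$ and yields a global solution of \eqref{MFDRBSDE P=1xx}.

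For uniqueness, let $(\underbar Y,\underbar Z,\underbar K^{\pm})$ be any other solution of \eqref{MFDRBSDE P=1xx}. By Lemma \ref{lemma31}, $\underbar Y$ inherits the Dynkin-game representation \eqref{dynkinlemme} on each subinterval of the grid; in particular, restricted to $[T-\d,T]$, it is a fixed point of $\Phi$ in $\cD([T-\d,T])$, so $Y_t=\underbar Y_t$ on that interval by uniqueness of the fixed point. Since the right endpoint values now coincide, the same argument on $[T-2\d,T-\d]$, $[T-3\d,T-2\d]$, $\ldots$, propagates the equality and yields $Y=\underbar Y$ on $[0,T]$. Equality of the stochastic integrals then forces $Z=\underbar Z$, and since $h<g$ the standard argument (see e.g.\ \cite{ehw}) gives $K^{\pm}=\underbar K^{\pm}$.

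The main technical obstacle is maintaining integrability uniformly through the iteration: in the $p=1$ case we have at our disposal only $\sup_{\t\in\cT_0}\E[|Y_\t|]<\infty$ and cannot invoke the $\sp_c$-type estimates used in Theorem \ref{fix_uniq}. What saves us is the horizon- and terminal-data-independence of $\d$ in Proposition \ref{contaction p=1}: each piece in the concatenation is of class $\cD$ on its own subinterval, and finitely many such pieces glue together into a process of class $\cD$ on $[0,T]$. A secondary subtlety is to check that the pasted increasing processes $K^{\pm}$ remain continuous with $K^{\pm}_0=0$, but this follows from the corresponding property of each building block $K^{k,\pm}$ and the initialization $K^{k,\pm}_{T-(k+1)\d}=0$ inherited from \eqref{eq:jeux2}.
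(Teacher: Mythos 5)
Your proposal is correct and follows essentially the same route as the paper's own proof: fixed point of $\Phi$ on $\cD([T-\d,T])$ via Proposition \ref{contaction p=1}, recovery of $(Z^0,K^{0,\pm})$ from the Dynkin-game/doubly-reflected-BSDE link, backward iteration on the grid $\{T-k\d\}$ using the independence of $\d$ from the terminal data and the horizon, concatenation, and uniqueness by propagating the fixed-point uniqueness interval by interval and then identifying $Z$ and $K^{\pm}$. Your closing remarks on class-$\cD$ integrability through finitely many concatenations and on the continuity and initialization of the pasted $K^{\pm}$ are points the paper leaves implicit, but they do not change the argument.
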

\begin{proof} Let $\d$ be as in Proposition \ref{contaction p=1} and $Y$ the fixed point of $\Phi$ on $\cD([T-\d,T])$ which exists since \eqref{cdtionsurd} is satisfied. Next let  $Y^1$ be the fixed point of $\Phi$ on ${\cD([T-2\delta, T-\delta])}$ with terminal condition $Y_{T-\d}$, i.e., for any $t\in [T-2\delta, T-\delta]$,
\begin{equation}\label{fix_con_value2xx}
\begin{aligned}
Y^1_t &=\esssup_{\tau\in [t,T-\d]}\essinf_{\sigma\in \in [t,T-\d]} \E\left\{\int_t^{\sigma\wedge\tau}f(s,Y^1_s,\E[Y^1_s])ds +g(\s, Y^1_\sigma, \E[Y^1_t]_{t=\sigma})\mathbb{1}_{\{\sigma<\tau\}}\right.\\
&\left.\qquad\qquad\qquad+h(\t, Y^1_\tau, \E[Y^1_t]_{t=\tau})\mathbb{1}_{\{\tau\leq\sigma,\t<T-\d\}}+Y_{T-\d}\mathbb{1}_{\{\tau=\sigma=T-\d\}}|\cF_t\right\}.
\end{aligned}
\end{equation}The process $Y^1$ exists since condition \eqref{cdtionsurd} is satisfied and $\d$ depends neither on $T$ nor on the terminal condition. Once more the link between reflected BSDEs and zero-sum Dynkin games (see Lemma \ref{lemma31}) implies the existence of $\cP$-measurable processes $Z^1$, $K^{1,\pm}$ such that:
\begin{equation}\label{eq:jeux3}\left\{\begin{array}{l}
\p-a.s.,\,\,\int_{T-2\d}^{T-\d}|Z^1_s|^2ds<\infty;\,\,K^{1,\pm} \in \ca \mbox{ and }K^{1,\pm}_{T-2\d}=0;\\\\
Y^1_t=Y_{T-\d}+\int_t^{T-\d}f(s,Y^1_s,\E[Y^1_s])ds+K^{1,+}_{T-\d}-K^{1,+}_t-K^{1,-}_{T-\d}+K^{1,-}_t-\int_t^{T-\d} Z^1_sdB_s,\,t\in [T-2\d,T-\d]; \\\\
h(t,Y^1_t,\E[Y^1_t])\le Y^1_t\le g(t,Y^1_t,\E[Y^1_t]),\,\,t\in [T-2\d,T-\d];\\\\
\int_{T-2\d}^{T-\d}(Y^1_t-h(t,Y^1_t,\E[Y^1_t]))d
K^{1,+}_t=\int_{T-2\d}^{T-\d}(Y^1_t-g(t,Y^1_t,\E[Y^1_t]))d
K^{1,-}_t=0.\end{array}\right.
\end{equation}
Concatenating now the solutions $(Y,Z^0,K^{0,\pm})$ of \eqref{eq:jeux2} and $(Y^1,Z^1,K^{1,\pm})$ we obtain a solution of \eqref{MFDRBSDE P=1} on
$[T-2\d,T]$. Actually for $t\in[T-2\delta,T]$, let us set:
\begin{align*}
&\tl Y_t=Y_t1_{[T-\delta,T]}(t)+{Y}^1_t1_{[T-2\delta,T-\delta)}(t),\\
&\tl Z_t=Z^0_t1_{[T-\delta,T]}(t)+{Z}^1_t1_{[T-2\delta,T-\delta)}(t),\\
&\dis\int_{T-2\delta}^td\tl K_t^{1,\pm}=\dis\int_{T-2\delta}^t\lbrace 1_{[T-\delta,T]}(s)d K_s^{0,\pm}+1_{[T-2\delta,T-\delta]}(s)d{K}_s^{1,\pm} \rbrace.
\end{align*}Then $\tl Y\in {\cD([T-2\delta, T]}$, $\tl Z\in \hlcd([T-2\d,T])$ and $\tl K^{\pm}\in \ca ([T-2\d,T])$ and they verify: For any $t\in [T-2\delta, T]$,
\begin{equation}\label{MFDRBSDE P>1xxx}
\left\{
\begin{aligned}&\tl Y_t = \xi + \int_t^T f(s, \tl Y_s, \E[\tl Y_s])ds +\tl K^+_T-\tl K^+_t - \tl K^-_T +\tl K^-_t -\int_t^T \tl Z_s dB_s;\\
&h( t,\tl Y_t, \E[\tl Y_t]) \leq \tl Y_t \leq g(t,\tl Y_t, \E[\tl Y_t]); \\
&\int_{T-2\d}^T(\tl Y_s-h(s,\tl Y_s, \E[\tl Y_s]))d\tl K^+_s=0 \mx{ \emph{and} } \int_{T-2\d}^T(\tl Y_s-g(s, \tl Y_s, \E[\tl Y_s]))d\tl K^-_s=0.
\end{aligned}\right.
\end{equation}
But we can do the same on $[T-3\d,T-2\d]$, $[T-4\d,T-3\d]$, etc. and at the end, by concatenation of those solutions, we obtain a solution $(Y,Z,K^{\pm})$ which satisfies \eqref{MFDRBSDE P>1}.

Let us now focus on uniqueness. Assume there is another solution $(\underbar Y,\underbar Z,\underbar K^{\pm})$ of \eqref{MFDRBSDE P>1}. It means that $\underbar Y$ is a fixed point of $\Phi$ on
${\cD([T-\delta, T])}$, therefore for any $t\in [T-\d,T]$, $Y_t=\underbar Y_t$.  Next writing equation \eqref{MFDRBSDE P>1} for $Y$ and $\underbar Y$ on $[T-2\d,T-\d]$, using the link with zeros-sum Dynkin games (Lemma \ref{lemma31}), the uniqueness of the fixed point of $\Phi$ on ${\cD([T-2\delta, T-\d])}$ implies that for any
$t\in [T-2\d,T-\d]$, $Y_t=\underbar Y_t$.  By continuing this procedure on $[T-3\d,T-2\d]$, $[T-4\d,T-3\d]$, etc. we obtain that $Y=\underline Y$. The equality between the stochastic integrals implies that $Z=\underline Z$. Finally as $h<g$ and since $Y=\underbar Y$, then $K^+=\underbar K^+$ and $K^-=\underbar K^-$ (see e.g. \cite{ehw} for more details). Thus the solution is unique.
The proof is complete.

Finally let us notice that the same Remark \ref{gene} is valid for this case $p=1$.\end{proof}
\section{\Large{Penalty method for the mean-field doubly reflected BSDE}}
\nd In this section, we study doubly reflected BSDE with mean-field term involved in the coefficients and obstacles by using the penalty method. Actually we consider the following mean-field reflected BSDE:
\begin{equation}\label{MF-DRBSDE}\left\{
\begin{aligned}
&Y\in \mathcal{S}^2_c, Z\in \mathcal{H}^{2,d}, K^\pm\in \mathcal{S}^2_{ci};\\
&Y_t= \xi +\int_t^T f(s,Y_s,\E[Y_s],Z_s)ds+ K_T^+-K_t^+-(K_T^- -K_t^-)-\int_t^T Z_s dB_s,\,\, t\leq T;\\
&h(t,Y_t,E[Y_t])\leq Y_t\leq g(t,Y_t,E[Y_t]),\ \  t\leq T;\\
&\int_0^T\left(Y_t-h(t,Y_t,E[Y_t])\right)dK_t^+=\int_0^T\left(g(t,Y_t,E[Y_t])-Y_t\right)dK_t^-=0.
\end{aligned}\right.
\end{equation}
\par
\nd For the ease of exposition, we assume that $p=2$ in \textbf{Assumption (A1)}. On the other hand, now $f$ may depend on $z$. Precisely the assumptions in this section are the following ones.\par
\ms

\noindent\textbf{\underline{Assumption (A2)}}\\ \\
\noindent (i) (a) $f(t,y,y',z)$ is Lipschitz with respect to $(y,y',z)$ uniformly in $(t,\omega)$, i.e. there exists a constant $C$ that $\p$-a.s. for all $t\in[0,T]$ and $y_1,y_1',y_2,$  $y_2'\in R$,
$$|f(t,y_1,y_1',z)-f(t,y_1,y_1',z)|\leq C(|y_1-y_2|+|y_1'-y_2'|+|z_1-z_2|), $$ and $(f(t,0,0,0))_{t\leq T}\in \mathcal{H}^{2,1}.$

(b) $y'\mapsto f(t,y,y',z)$ is non-decreasing for fixed $t,y,z$.

(c) $y\mapsto f(t,y,y',z)$ is non-decreasing for fixed $t,y,z$.
\par
\ms

\noindent (ii) (a) $\p-a.s.$ for any $t\le T$, $h(\omega,t,y,y')$ and $g(\omega,t,y,y')$ are non-decreasing w.r.t $y$ and $y'$;

(b) The processes $(h(\omega,t,0,0)_{t\le T}$ and
$(g(\omega,t,0,0)_{t\le T}$ belong to $\cS^2_c$;

(c) $\p$-a.s.  $|g(t,\omega,y,y')-g(t,\omega,0,0)|+|h(t,\omega,y,y')-h(t,\omega,0,0)|\le C(1+|y|+|y'|)$ for some constant $C$ and any $t,y, y'$;
\ms

(d) \underline{Adapted Mokobodski's condition}: There exists a process $(X_t)_{t\le T}$ satisfying
$$ \forall t\le T,\,\,X_t=X_0+\int_0^t J_s dB_s +V_t^+-V_t^-$$ with $J\in\mathcal{H}^{2,d}$ and  $V^+,V^-\in \mathcal{S}_{ci}^2$ such that $\p$-a.s., for any $t,y,y'$,
\begin{equation*}
\begin{aligned}
h(\omega,t,y,y')\leq X_t\leq g(\omega,t,y,y');
\end{aligned}
\end{equation*}
(e) $\p$-a.s., $h(t,\og,y,y')<g(t,\og,y,y'),$ for any $t, y, y'\in \R$.\par
\noindent(iii) $\xi$ is an $\cF_T$- measurable, $\R$-valued r.v., $\E[\xi^2]<\infty$ and satisfies $\p$-a.s., $h(T,\xi, \E[\xi])\leq \xi \leq g(T,\xi, \E[\xi])$.\par
\ms

We denote that, the constant $C$ changes line by line in this paper.\par
\begin{remark}\label{remark for assumption}

(a) Assumption (A2)-(i),(c) is not stringent since if $f$ does not satisfy this property one can modify the mean-field doubly reflected BSDE \eqref{MF-DRBSDE} by using an exponential transform in such a way to fall in the case where $f$ satisfies
(A2)-(i),(c) (one can see e.g. \cite{mfrbsde2019}, Corollary 4.1 for more details).
\ms

\nd (b) In this penalization method, contrarily to the previous one, we cannot replace in $f(.)$ the expectation $\E[Y_s]$ with the law $\p_{Y_s}$, because of lack of comparison in this latter case. For the same reason $f(.)$ cannot depend on $\E[Z_s]$ or more generally on $\p_{Z_s}$, the law of $Z_s$.
\ms

\noindent (c) On the adapted Mokobodski's condition: We use Assumption (A2)-(ii),(d) to let
$h(t,Y_t,\E[Y_t])\leq Y_t\leq g(t,Y_t,\E[Y_t])$, $t\leq T$, possible. When we do not have Assumption (A2)-(ii),(d), a counter-example is considered below:\\
\ms
\nd For simplicity, we set $g(t,y,y')=\bar{C}$ where $\bar{C}$ is a large enough constant in such a way that \eqref{MF-DRBSDE} becomes a RBSDE, i.e.
\begin{equation}\label{example RBSDE}\left\{
\begin{aligned}
&Y_t= \xi +\int_t^T f(s,Y_s,\E[Y_s],Z_s)ds+ K_T-K_t-\int_t^T Z_s dB_s,\,t\leq T;\\
& Y_t\geq h(t,Y_t,\E[Y_t]),\,t\leq T;\,\,
\int_0^T\left(Y_t-h(t,Y_t,\E[Y_t])\right)dK_t=0.
\end{aligned}\right.
\end{equation}
Then, let $Y^K$ satisfy
\begin{equation}\label{YK for general example}
\begin{aligned}
Y^K_t= \xi +\int_t^T f(s,Y^K_s,\E[Y^K_s],Z^K_s)ds+ K_T-K_t-\int_t^T Z^K_s dB_s, \,\,t\le T,
\end{aligned}
\end{equation}
for a given increasing process K. If there exists a solution $Y$ of \eqref{example RBSDE}, then there exists an increasing process $K$ such that $Y_t\geq h(t,Y_t,\E[Y_t])$. Now if we do not have Assumption (A2)-(ii),(d), then, for some $h$ (see a explicit example below) and a fixed $t$, we obtain that
$$ Y^K_t-h(t,Y^K_t,\E[Y^K_t])<0,\ \ \ \text{for any increasing processes} \ \ K,$$
which means \eqref{example RBSDE} has no solution.\par More precisely, when we take $h(t,y,y')=y+y'+1$, $\xi=-T$ and $f=1$, for $T=1$, it follows that
\begin{equation}\label{example: contradiction on T=1 YK}
\begin{aligned}
&Y^K_t=-t+K_1-K_t-\int_t^1 Z^K_s dB_s,\ \ \text{for an increasing process } K. \\
\end{aligned}
\end{equation}
  Then for $t\in[0,1)$, we have
\begin{equation}
\begin{aligned}
\E[Y^K_t]=-t +\E[K_1-K_t]\geq -t> -1,
\end{aligned}
\end{equation}
which does not satisfy the inequality $\E[Y^K_t]\leq -1 $.
Therefore, corresponding Mean-field RBSDE, i.e.,
\begin{equation}\label{example: contradiction on T=1 Y}
\left\{\begin{aligned}
&Y_t=-t+K_1-K_t-\int_t^1 Z_s dB_s,\\
& Y_t\geq Y_t+\E[Y_t]+1, \ \ \text{and}\ \ \int_0^1\left(\E[Y_t]+1\right)dK_t=0,
\end{aligned}\right.
\end{equation}
has no solution, because for $t\in [0,1)$, $Y_t$ does not satisfy the obstacle constraint for any increasing process $K$.
\par
However, if we have \emph{Assumption (A2)-}\emph{(ii),(d)}, i.e., for any $t\in[0,T)$,
$$\sup_{y,y'} h(t,y,y')\leq X_t,$$
then, for any $h$ and  $t\in[0,T)$, there exists an increasing $K$ such that $Y^K_t$ satisfying
$$Y^K_t\geq X_t \geq \sup_{y,y'} h(t,y,y') \geq h(t,Y^K_t,\E[Y^K_t]).$$
Thus $Y$ satisfying $Y_t\geq h(t,Y_t,\E[Y_t])$, $t\le T$, is possible in \eqref{example RBSDE} for any $h$ satisfying \emph{Assumption (A2)-}\emph{(ii),(d)} in this case.\qed
\end{remark}

Now we are going to show that \eqref{MF-DRBSDE} has a solution by penalization. First, we define the process $Y^{0,0}$ by the solution of a standard Mean-field BSDE, i.e.
\begin{equation}\label{MF-BSDE for 00}
\begin{aligned}
Y_t^{0,0}= \xi +\int_t^T f(s,Y_s^{0,0},\E[Y_s^{0,0}],Z_s^{0,0})ds-\int_t^T Z_s^{0,0} dB_s,\,t\le T,
\end{aligned}
\end{equation}
whose existence and uniqueness is obtained thanks to \cite{li2018general}*{Theorem 3.1}. By assuming that
\begin{equation}\label{notation of n=-1 m=-1}
Y^{n,-1}=Y^{n,0}, \ \ Y^{-1,m}=Y^{0,m},\ \ \text{and}  \ \ Y^{-1,-1}=Y^{0,0}, \ \ \text{for any}\ \ n\geq 0, m\geq 0,
\end{equation}  \\
we present a series of mean-field BSDEs, for $n,m \geq 0$, as follows:
\begin{equation}\label{MF-DRBSDE for nm}\left\{
\begin{aligned}
&Y^{n,m}\in\cS^2_c,\ \ Z^{n,m}\in \cH^{2,d};\\
&Y_t^{n,m}= \xi +\int_t^T f(s,Y_s^{n,m},\E[Y_s^{n-1,m-1}],Z_s^{n,m})ds+ K_T^{n,m,+}-K_t^{n,m,+}-(K_T^{n,m,-}-K_t^{n,m,-})-\int_t^T Z_s^{n,m} dB_s,t\le T,\\
\end{aligned}\right.
\end{equation}
where  for any $t\le T$,
\begin{equation}\label{expknm}
\begin{aligned}
&K_t^{n,m,+}:=m\int_0^t \left(Y_s^{n,m}-h(s,Y_s^{n-1,m-1},\E[Y_s^{n-1,m-1}])\right)^- ds \mx{ and }\\
&K_t^{n,m,-}:=n\int_0^t \left(Y_s^{n,m}-g(s,Y_s^{n-1,m-1},\E[Y_s^{n-1,m-1}])\right)^+ ds.
\end{aligned}
\end{equation}
By comparison theorem of mean-field BSDE (see \cite{li2018general}*{Theorem 2.2}), we have the following inequality for any $n\geq 0, m\geq 0$, $t\in\time$,
\begin{equation}\label{inequality of Ynm Yn+1m and Ynm+1}
\begin{aligned}
Y_t^{n+1,m}\leq Y_t^{n,m}\leq Y_t^{n,m+1}.
\end{aligned}
\end{equation}
It is obtained by induction on $k$ such that $n+m\leq k$. For $k=0$ the property holds true since by comparison we have: For any $t\le T$,
\begin{equation}\label{ineq1}
\begin{aligned}
Y_t^{1,0}\leq Y_t^{0,0}\leq Y_t^{0,1}.
\end{aligned}
\end{equation}
Next assume that it is satisfied for some $k$ and let us show that it is also satisfied for $n+m\le k+1$. So it remains to show that it is satisfied for $n+m=k+1$, i.e.,  to show that
\begin{equation}
\begin{aligned}
Y^{n+1,k+1-n}\leq Y^{n,k+1-n}\leq Y^{n,k+2-n},\ \ \text{for any}\ \  n=0,1,2,...,k+1.
\end{aligned}
\end{equation}
Here, we will only show $Y^{n+1,k+1-n}\leq Y^{n,k+1-n}$ and the remainder is similar. By assumption, we have $Y^{n,k-n}\leq Y^{n-1,k-n}$ for $n=0,1,2,...,k+1$, where we use the notation \eqref{notation of n=-1 m=-1}   for $n=0$ and $n=k+1$. Since $f(t,y,y',z)$ is non-decreasing w.r.t $y'$ and $g(t,y,y')$ is non-decreasing w.r.t  $y,y'$, then for any
$n=0,1,...,k+1$, we have
\begin{equation}
\begin{aligned}
f(s,y,\E[Y_s^{n,k-n}],z)\leq & f(s,y,\E[Y_s^{n-1,k-n}],z)\ \ \ \text{and}\\
-(n+1)\left(y-g(s,Y^{n,k-n}_s,\E[Y^{n,k-n}_s])\right)^+ \leq & -n\left(y-g(s,Y^{n-1,k-n}_s,\E[Y^{n-1,k-n}_s])\right)^+. \\
\end{aligned}
\end{equation}
Taking now into account of the monotonicity property of $h$ (see (A2)-(ii),(a)) we deduce, by comparison, that $Y^{n+1,k-n}\leq Y^{n+1,k+1-n}$. Then the induction is completed.\par
Next for simplicity, we write
$$\forall n,m\ge 0 \mx{ and }s\le T,\,\,L^{n,m}_s:=h(s,Y_s^{n,m},\E[Y_s^{n,m}]),\ U^{n,m}_s:=g(s,Y_s^{n,m},\E[Y_s^{n,m}]).$$
Then we have the following estimates.
\begin{lemma}\label{estimate of YZKK nm of BSDE}
There exists a constant $C\ge 0$ such that:\\ i) For any $n,m \geq  0$,
\begin{equation}\lb{roughestimateynm}
\begin{aligned}
\sup_{0\leq t\leq T}\E[(Y^{n,m}_t)^2]+&\E\Big [\int_0^T |Z^{n,m}_t|^2 dt\Big ]\leq C.
\end{aligned}
\end{equation}
ii) For any $n,m\ge 0$,
\begin{equation}\lb{eq11bis}
\begin{aligned}
\E\Big[m^2\left(\int_0^T\left(Y_s^{n,m}-L^{n-1,m-1}_s\right)^- dt\right)^2+n^2\left(\int_0^T\left(Y_s^{n,m}-U^{n-1,m-1}_s\right)^+ dt\right)^2\Big ]\leq C.
\end{aligned}
\end{equation}
\end{lemma}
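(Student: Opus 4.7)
The strategy rests on the Mokobodski process $X$ from Assumption (A2)-(ii)-(d). I set $\bar Y^{n,m}_t := Y^{n,m}_t - X_t$ throughout. By Mokobodski, $L^{n-1,m-1}_s \le X_s \le U^{n-1,m-1}_s$ for every $s\le T$, so the support of $dK^{n,m,+}$ (the set $\{Y^{n,m}_s < L^{n-1,m-1}_s\}$) lies inside $\{\bar Y^{n,m}_s < 0\}$, while the support of $dK^{n,m,-}$ lies inside $\{\bar Y^{n,m}_s > 0\}$. Consequently the cross terms $\bar Y^{n,m}_s\,dK^{n,m,+}_s \le 0$ and $-\bar Y^{n,m}_s\,dK^{n,m,-}_s \le 0$, which is the mechanism that produces bounds uniform in the penalization parameters $n,m$.

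For part (i), I apply It\^o's formula to $(\bar Y^{n,m}_t)^2$. After dropping the two nonpositive penalty contributions on the right-hand side, the Lipschitz property of $f$ and Young's inequality (using $|Y^{n,m}|\le|\bar Y^{n,m}|+|X|$ and $|Z^{n,m}|\le|Z^{n,m}-J|+|J|$) permit absorbing a small fraction of $|Z^{n,m}-J|^2$ into the left-hand side; the Burkholder--Davis--Gundy inequality handles the $dB$-integral, and the $\cS^2_{ci}$-integrability of $V^{\pm}$ handles the mixed $\int \bar Y^{n,m}_s\,dV^{\pm}_s$ terms. The only nontrivial residue is a Lipschitz coupling $\int_0^T(\E|Y^{n-1,m-1}_s|)^2\,ds$ arising from the mean-field argument of $f$. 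I deal with it by working instead with $e^{\lambda t}(\bar Y^{n,m}_t)^2$ for a sufficiently large $\lambda$: the resulting recurrence on $W_{n,m}:=\int_0^T e^{\lambda s}\E(\bar Y^{n,m}_s)^2\,ds$ has the form $W_{n,m}\le C/\lambda + \rho\,W_{n-1,m-1}$ with $\rho<1$, which combined with the standard $L^2$-bound for $Y^{0,0}$ (a standard mean-field BSDE, cf.\ \cite{li2018general}) closes an induction on $n+m$ and yields \eqref{roughestimateynm}.

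For part (ii), the direct It\^o estimate on $(\bar Y^{n,m})^2$ only produces $m\,\E\int_0^T[(Y^{n,m}_s-L^{n-1,m-1}_s)^-]^2 ds\le C$, which is short by a factor of $m$ for the target bound on $\E[(K^{n,m,+}_T)^2]$. Instead, I apply the Tanaka--Meyer formula to $|\bar Y^{n,m}|$: since $\mathrm{sgn}(\bar Y^{n,m})=-1$ on the support of $dK^{n,m,+}$ and $+1$ on that of $dK^{n,m,-}$, the corresponding increment in the Tanaka identity reads $\int_0^T \mathrm{sgn}(\bar Y^{n,m}_s)(-dK^{n,m,+}_s+dK^{n,m,-}_s)=K^{n,m,+}_T+K^{n,m,-}_T$, i.e.\ the total variation. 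Discarding the nonnegative local-time term and using part (i) together with the $\cS^2$-integrability of $X$, $V^{\pm}$ and $J$ yields $\E[(K^{n,m,+}_T+K^{n,m,-}_T)^2]\le C$, after which the elementary inequality $a^2+b^2\le (a+b)^2$ for nonnegative $a,b$ delivers \eqref{eq11bis}. The main obstacle, once (i) is in hand, is exactly this choice of test function: the quadratic $(\bar Y^{n,m})^2$ cannot separate $K^{n,m,+}_T$ from $K^{n,m,-}_T$, and only the linear $|\bar Y^{n,m}|$ via Tanaka recovers the total-variation combination that remains stable as $n,m\to\infty$.
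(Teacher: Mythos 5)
Your proposal is correct, but it takes a genuinely different route from the paper on both halves of the lemma. The paper first proves a precursor of (ii): it introduces the alternating hitting times $T_k$, $S_k$ of the two obstacles, uses that on $[T_k,S_k]$ only one penalty term is active, and exploits the semimartingale decomposition $X=X_0+\int J\,dB+V^+-V^-$ together with $Y^{n,m}_{S_k}\le X_{S_k}$, $Y^{n,m}_{T_k}\ge X_{T_k}$ to bound $n\int_\eta^\tau(Y^{n,m}_s-U^{n-1,m-1}_s)^+ds$ and $m\int_\eta^\tau(Y^{n,m}_s-L^{n-1,m-1}_s)^-ds$ in terms of $\E\int|Z^{n,m}|^2$ and $\E\int (Y^{\cdot,\cdot})^2$; these bounds are then injected (via Young's inequality with a small $\beta$) into the It\^o computation for $(Y^{n,m})^2$ to control the penalty cross-terms, and a Gronwall induction along the diagonal $(n,m)\mapsto(n-1,m-1)$ with the explicit supersolution $\gamma(t)=\widetilde Ce^{2\widetilde C(T-t)}$ yields (i); substituting (i) back gives (ii). You instead shift by the Mokobodski process, which makes the penalty cross-terms in It\^o's formula for $(Y^{n,m}-X)^2$ nonpositive so they can be discarded outright — this decouples (i) from (ii), whereas in the paper (i) logically depends on the stopping-time estimates — and your exponential-weight recursion $W_{n,m}\le C/\lambda+\rho W_{n-1,m-1}$ plays the role of the paper's Gronwall induction. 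For (ii), your Tanaka--Meyer identity on $|Y^{n,m}-X|$ is a clean observation: since the supports of $dK^{n,m,\pm}$ lie strictly in $\{\pm(Y^{n,m}-X)<0\}$ (by $h\le X\le g$), the signed combination $-dK^{n,m,+}+dK^{n,m,-}$ integrates against $\mathrm{sgn}(Y^{n,m}-X)$ to the total variation $K^{n,m,+}_T+K^{n,m,-}_T$, and discarding the nonnegative local time gives the bound in one stroke, with $a^2+b^2\le(a+b)^2$ finishing. Two minor points you should make explicit if you write this out: the terms $\int (Y^{n,m}_s-X_s)\,dV^\pm_s$ in your It\^o step genuinely require an $\E[\sup_t|\cdot|^2]$ absorption via Burkholder--Davis--Gundy (legitimate here since each $Y^{n,m}\in\cS^2_c$ a priori), not just the $\sup_t\E$ bound the lemma asserts; and note that the paper's stopping-time inequalities \eqref{eq:1.13}--\eqref{eq:1.132} between \emph{arbitrary} stopping times $\eta\le\tau$ are reused later (to show the limiting $K^\pm$ have no jumps), information which your terminal-time Tanaka bound does not by itself provide — though that is outside the scope of this lemma.
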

\begin{proof}Let $\eta$ and $\tau$ be stopping times such that $\eta\le \tau$ $\p-a.s.$. We define two sequences of stopping times $\{T_k\}_{k\ge 1}$ and $\{S_k\}_{k\ge 0}$ by
\begin{equation*}
\begin{aligned}
S_0=\eta,\ \  &T_k=\inf\{S_{k-1}\leq r\leq T: Y^{n,m}_r = U^{n-1,m-1}_s\}\wedge \tau
, \ \ k\ge 1,\\
&S_k=\inf\{T_k\leq r\leq T: Y^{n,m}_r = L^{n-1,m-1}_s\}\wedge \tau,\ \ k\ge 1. \end{aligned}
\end{equation*}
As $h<g$ then when $k$ tends to $+\infty$, we have $T_k\nearrow \t$ and $S_k \nearrow \t$. Since $Y^{n,m}\geq L^{n-1,m-1}$ on $[T_k,S_k]\cap \{T_k<S_k\}$, we have
\begin{equation}
\begin{aligned}
Y_{T_k}^{n,m}=Y_{S_k}^{n,m}  +\int_{T_k}^{S_k} f(s,Y_s^{n,m},\E[Y_s^{n-1,m-1}],Z_s^{n,m})ds-\int_{T_k}^{S_k} Z_s^{n,m} dB_s- n\int_{T_k}^{S_k} \left(Y_s^{n,m}-U^{n-1,m-1}_s\right)^+ ds,
\end{aligned}
\end{equation}
which implies, for all $k\geq 1$,
\begin{equation}
\begin{aligned}
n\int_{T_k}^{S_k} &\left(Y_s^{n,m}-U^{n-1,m-1}_s\right)^+ ds \leq  X_{S_k}-X_{T_k}+\int_{T_k}^{S_k} f(s,Y_s^{n,m},\E[Y_s^{n-1,m-1}],Z_s^{n,m})ds-\int_{T_k}^{S_k} Z_s^{n,m} dB_s\\
\leq & -\int_{T_k}^{S_k} J_s dB_s+V_{S_k}^+-V_{T_k}^+-V_{S_k}^-+V_{T_k}^--\int_{T_k}^{S_k} Z_s^{n,m} dB_s+\int_{T_k}^{S_k} |f(s,Y_s^{n,m},\E[Y_s^{n-1,m-1}],Z_s^{n,m})|ds\\
\leq & -\int_{T_k}^{S_k} J_s dB_s+V_{S_k}^+-V_{T_k}^+-\int_{T_k}^{S_k} Z_s^{n,m} dB_s+\int_{T_k}^{S_k} |f(s,Y_s^{n,m},\E[Y_s^{n-1,m-1}],Z_s^{n,m})|ds.
\end{aligned}
\end{equation}
Summing in $k$, we get
\begin{equation}\label{eq:1.13}
\begin{aligned}
n\int_{\h}^{\t} \left(Y_s^{n,m}-U^{n-1,m-1}_s\right)^+ ds \leq & -\int_{\h}^{\t} \left(J_s+Z_s^{n,m}\right)\sum_{k\ge 1}\{\mathbb{1}_{[T_k,S_k)}(s)\} dB_s+(V_\t^+-V_\h^+)\\&+\int_{\h}^{\t} |f(s,Y_s^{n,m},\E[Y_s^{n-1,m-1}],Z_s^{n,m})|ds.
\end{aligned}
\end{equation}
Take now $\h=t$ and $\t=T$, then squaring and taking expectation on the both sides of \eqref{eq:1.13}, we get
\begin{equation}\label{boundness of penalty U}
\begin{aligned}
n^2\E &\left(\int_{t}^{T} (Y_s^{n,m}-U^{n-1,m-1}_s)^+ ds \right)^2\\ &\leq  C\E\left(\int_{t}^{T} |J_s+Z_s^{n,m}|^2 ds+ \E[\left( V_T^+\right)^2]+\int_{t}^{T} |f(s,Y_s^{n,m},\E[Y_s^{n-1,m-1}],Z_s^{n,m})|^2ds\right)\\
&\leq C\E\left(1+\int_{t}^{T} |Z_s^{n,m}|^2 ds+\int_{t}^{T} (Y_s^{n,m})^2ds +\int_{t}^{T} (Y_s^{n-1,m-1})^2ds\right),
\end{aligned}
\end{equation}
where we have used the Lipschitz continuity of $f$ and (A2)-(i),(a). Similarly considering \eqref{MF-DRBSDE for nm} between $S_{k-1}$ and $T_k$, we obtain
\begin{equation}\label{eq:1.132}
\begin{aligned}
m\int_{\h}^{\t} \left(Y_s^{n,m}-L^{n-1,m-1}_s\right)^- ds \leq & \int_{\h}^{\t} \left(-J_s+Z_s^{n,m}\right)\sum_{k\ge 1}\{\mathbb{1}_{[S_{k-1},T_k)}(s)\} dB_s+(V_\t^--V_\h^-)\\&-\int_{\h}^{\t} f(s,Y_s^{n,m},\E[Y_s^{n-1,m-1}],Z_s^{n,m})ds,
\end{aligned}
\end{equation}
and then, as previously,  we have:
\begin{equation}\label{boundness of penalty L}
\begin{aligned}
m^2\E\left(\int_{t}^{T} \left(Y_s^{n,m}-L^{n-1,m-1}_s\right)^- ds\right)^2
\leq C\E\left(1+\int_{t}^{T} |Z_s^{n,m}|^2 ds+\int_{t}^{T} (Y_s^{n,m})^2 ds+\int_{t}^{T} (Y_s^{n-1,m-1})^2ds\right).
\end{aligned}
\end{equation}
Next, applying It\^{o}'s formula to $(Y^{n,m})^2$, for all constants $\alpha,\beta>0$, we have
\begin{equation}\label{Ito formula of Ynm2 of nm BSDE}
\begin{aligned}
\E&[(Y^{n,m}_t)^2]+\E\int_t^T |Z^{n,m}_s|^2 ds\\ =&\E[\xi^2]+2\E\left[\int_t^T Y_s^{n,m} f(s,Y_s^{n,m},\E[Y_s^{n-1,m-1}],Z_s^{n,m})ds\right]+\E\left[2m\int_t^T Y_s^{n,m}\left(Y_s^{n,m}-L^{n-1,m-1}_s\right)^- ds \right.\\
&\left.- 2n\int_t^T Y_s^{n,m}\left(Y_s^{n,m}-U^{n-1,m-1}_s\right)^+ ds\right]\\
 \leq &C \left(1+ (2+\frac{1}{\alpha})\E\int_t^T (Y_s^{n,m})^2 ds+\E\int_{t}^{T} (Y_s^{n-1,m-1})^2ds+\alpha\E\int_{t}^{T}|Z_s^{n,m}|^2 ds\right)+ \frac{1}{\beta}\E\sup_{s\in[t,T]}\left((L^{n-1,m-1}_s)^+\right)^2\\
&+\frac{1}{\beta}\E\sup_{s\in[t,T]}\left((U^{n-1,m-1}_s)^-\right)^2+\beta m^2\E\Big \{\left(\int_t^T(Y_s^{n,m}-L^{n-1,m-1}_s)^- ds\right)^2\Big\} +\beta n^2\E\Big \{\left(\int_t^T(Y_s^{n,m}-U^{n-1,m-1}_s)^+ ds\right)^2\Big\},
\end{aligned}
\end{equation}
where, according to \textbf{Assumption (A2)}-(ii),(d) we have \begin{equation}\label{estimates of supU and supL}
\begin{aligned}
\E\{\sup_{s\in[t,T]}\left((L^{n-1,m-1}_s)^+\right)^2+\sup_{s\in[t,T]}\left((U^{n-1,m-1}_s)^-\right)^2\} \leq 2\E\{\sup_{s\in[t,T]}|X_s|^2\}<+\infty.
\end{aligned}
\end{equation}
Then, substituting \eqref{boundness of penalty U}, \eqref{boundness of penalty L} and \eqref{estimates of supU and supL} to \eqref{Ito formula of Ynm2 of nm BSDE}, we get
\begin{equation}\label{Ito fomula of Y2}
\begin{aligned}
\E[(Y^{n,m}_t)^2]&+\E\int_t^T |Z^{n,m}_s|^2 ds \\
\leq &  C\E\left(1+(2+\frac{1}{\alpha})\int_t^T (Y_s^{n,m})^2 ds+\int_{t}^{T} (Y_s^{n-1,m-1})^2ds+ \alpha\int_{t}^{T}|Z_s^{n,m}|^2 ds\right)\\&+C\beta\E\left(1+\int_{t}^{T} (Y_s^{n,m})^2 ds+\int_{t}^{T} (Y_s^{n-1,m-1})^2ds+\int_{t}^{T} |Z_s^{n,m}|^2 ds\right).
\end{aligned}
\end{equation}
By choosing $\alpha$ and $\beta$ satisfying $(\alpha+\beta)C =\frac{1}{2} $ in the previous inequality, we obtain
\begin{equation}\label{Recursively using gronwall inequality}
\begin{aligned}
\E[(Y^{n,m}_t)^2]\leq  \widetilde{C}\E\left(1+\int_t^T (Y_s^{n,m})^2 ds+\int_{t}^{T} (Y_s^{n-1,m-1})^2ds\right),
\end{aligned}
\end{equation}
where  $\widetilde{C}$ is a constant  independent of $n$ and $m$.\par
Then, letting $n=m=0$ and recalling $Y^{-1,-1}=Y^{0,0}$, it follows that
\begin{equation*}\label{the inequality to use gronwall of Y00}
\begin{aligned}
\E[(Y^{0,0}_t)^2]\leq \widetilde{C}\E\left(1+2\int_t^T (Y_s^{0,0})^2 ds\right).
\end{aligned}
\end{equation*}
Using Gronwall's inequality, we have for all $t\in [0,T]$
\begin{equation}\label{ieq of Y00}
\begin{aligned}
\E[(Y^{0,0}_t)^2]\leq \gamma(t),
\end{aligned}
\end{equation}
where the function $\gamma: [0,T]\to R$ is defined by
\begin{equation*}\label{alpha_t defined for induction}
\begin{aligned}
\gamma(t):= \widetilde{C}e^{2\widetilde{C}(T-t)}.
\end{aligned}
\end{equation*}
Next, we assume that for $n=k$, the following inequality is guaranteed for all $t\in[0,T]$.
\begin{equation}\label{ieq of Yn0}
\begin{aligned}
\E[(Y^{n,0}_t)^2]\leq \gamma(t).
\end{aligned}
\end{equation}
Noticing the fact
$
\gamma(t)=\widetilde{C}(1+2\int_t^T \gamma(s)ds),
$
we have
\begin{equation*}
\begin{aligned}
\E[(Y^{k+1,0}_t)^2-\gamma(t)]\leq &\widetilde{C}\E\left(\int_t^T \left((Y_s^{k+1,0})^2-\gamma(s)\right)ds\right)+\widetilde{C}\E\left(\int_t^T \left((Y_s^{k,0})^2-\gamma(s)\right)ds\right)\\
\leq &\widetilde{C}\E\left(\int_t^T \left((Y_s^{k+1,0})^2-\gamma(s)\right)ds\right).
\end{aligned}
\end{equation*}
Applying Gronwall's inequality, we obtain  $$\E[(Y^{k+1,0}_t)^2]\leq\gamma(t)\ \ \ \text{for any }\  t\in[0,T],$$ which means \eqref{ieq of Yn0} is also guaranteed for $n=k+1$.
Thus, \eqref{ieq of Yn0} is correct for all $n\geq 0$.\par
Similarly, for a fixed $n$ and all $m\geq 0$, by using mathematical induction and Gronwall's inequality, we can obtain the following inequality for any $t\in[0,T]$,
\begin{equation}\label{ieq of Ynm}
\begin{aligned}
\E[(Y^{n,m}_t)^2]\leq \gamma(t).
\end{aligned}
\end{equation}
Since $n$ is arbitrary, \eqref{ieq of Ynm} is true for any $n,m\geq 0$.\\
Noticing $\gamma(t)$ is bounded, we have
\begin{equation}\label{S2 of Ynm}
\begin{aligned}
\sup_{0\leq t\leq T}\E[(Y^{n,m}_t)^2]\leq C, \ \text{uniformly in}\ \ (n,m).
\end{aligned}
\end{equation}
Then by \eqref{Ito fomula of Y2}, we can check
\begin{equation}\label{L2 of Znm}
\begin{aligned}
\E\int_0^T|Z^{n,m}_t|^2 dt\leq C, \ \text{uniformly in}\ \ (n,m).
\end{aligned}
\end{equation}
If we transfer \eqref{S2 of Ynm} and \eqref{L2 of Znm} into  \eqref{boundness of penalty U} and \eqref{boundness of penalty L} respectively, we get that
\begin{equation}
\begin{aligned}
n^2\E \left(\int_{t}^{T} (Y_s^{n,m}-U^{n-1,m-1}_s)^+ ds \right)^2\leq C \mx{ and }
m^2\E\left(\int_{t}^{T} \left(Y_s^{n,m}-L^{n-1,m-1}_s\right)^- ds\right)^2
\leq C.
\end{aligned}
\end{equation}
The result follows immediately.
\end{proof}\par
\begin{proposition}\label{Lemma: estimats of Yn Zn Kn and penalty term} For any $n\ge 0$, there exist processes $(Y^n,Z^n,K^n)$ that satisfy the following one barrier reflected BSDE:
\begin{equation}\label{MRBSDE of coeffcient n}\left\{
\begin{aligned}
&\mbox{ $Y^n\in \cS^2_c$, $K^{n,+}\in \cS^2_c$ non-decreasing ($K_0^{n,+}=0$) and $Z^n$ belongs to $\cH^{2,d}$}\,;\\
&Y_t^{n}= \xi +\int_t^T f(s,Y_s^{n},\E[Y_s^{n-1}],Z_s^{n})ds+ K_T^{n,+}-K_t^{n,+}-n\int_t^T(Y_s^{n}-U^{n-1}_s)^+ ds-\int_t^T Z_s^{n} dB_s,t\le T;\\
&Y^n_t\geq L^{n-1}_t,\ t\leq T,\mx{ and }\ \  \int_0^T(Y^n_t-L^{n-1}_t) dK^{n,+}_t=0,
\end{aligned}\right.
\end{equation}
where for any $n\ge 1$ and $t\le T$, $L^{n-1}_t=h(t,Y^n_t,\E[Y_t^{n-1}])$ and
 $U^{n-1}_t=g(t,Y^n_t,\E[Y_t^{n-1}])$. Moreover the following estimates hold true:
\begin{equation}\label{estimats of Yn Zn Kn and penalty term1}
\begin{aligned}
\E[\sup_{0\leq t\leq T}(Y^{n}_t)^2]+\E[\int_0^T |Z^{n}_t|^2 dt]+\E[(K_T^{n,+})^2] +\E
[ n^2\left(\int_0^T \left(Y_t^{n}-U^{n-1}_t\right)^+ dt\right)^2]\leq C,
\end{aligned}
\end{equation}
where $C$ is a constant which does not depend on $n$.
\end{proposition}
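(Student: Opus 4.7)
My plan is induction on $n$, realizing $(Y^n, Z^n, K^{n,+})$ as the limit, as $m\to\infty$, of the triples $(Y^{n,m}, Z^{n,m}, K^{n,m,+})$ from the scheme \eqref{MF-DRBSDE for nm}. The base case and the inductive step are handled in the same way; the base case exploits that the upper-penalty coefficient vanishes at $n=0$. For the inductive step I assume the existence of $Y^{n-1}$ with $Y^{n-1,m}\to Y^{n-1}$ in $\cS^2_c$ as $m\to\infty$, together with the estimates \eqref{estimats of Yn Zn Kn and penalty term1} at level $n-1$.

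The starting point is the monotonicity $Y^{n,m}\le Y^{n,m+1}$ from \eqref{inequality of Ynm Yn+1m and Ynm+1} combined with the uniform bounds of Lemma \ref{estimate of YZKK nm of BSDE}. Setting $Y^n_t:=\lim_m Y^{n,m}_t$ a.s., a monotone-limit argument for penalized BSDEs (using the uniform $L^2$-bounds \eqref{eq11bis} on $K^{n,m,+}$, the inductive $\cS^2_c$-convergence of $Y^{n-1,m-1}\to Y^{n-1}$, and the Lipschitz continuity of $f$ in $(y,z)$) upgrades this to $Y^{n,m}\to Y^n$ in $\cS^2_c$, $Z^{n,m}\to Z^n$ in $\cH^{2,d}$, and $K^{n,m,+}\to K^{n,+}$ in $\cS^2$, with $K^{n,+}$ continuous and non-decreasing.

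Passing to the limit in \eqref{MF-DRBSDE for nm} then yields \eqref{MRBSDE of coeffcient n}: the upper-penalty term $n\int_0^\cdot(Y^{n,m}_s - U^{n-1,m-1}_s)^+\,ds$ converges to $n\int_0^\cdot(Y^n_s - U^{n-1}_s)^+\,ds$ by Lipschitz continuity of $g$ and continuous mapping, and similarly for the generator. The obstacle $Y^n\ge L^{n-1}$ follows from Jensen applied to \eqref{eq11bis}, giving $\E\int_0^T(Y^{n,m}_s-L^{n-1,m-1}_s)^-\,ds\le\sqrt{C}/m\to 0$, and continuity transfers the inequality pointwise. The Skorohod condition comes from the identity $\int_0^T(Y^{n,m}_s-L^{n-1,m-1}_s)\,dK^{n,m,+}_s = -m\int_0^T[(Y^{n,m}_s-L^{n-1,m-1}_s)^-]^2\,ds \le 0$; taking $m\to\infty$ and using $(Y^n-L^{n-1})\ge 0$ together with $dK^{n,+}\ge 0$ forces the limiting integral to vanish.

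The $n$-uniform estimates \eqref{estimats of Yn Zn Kn and penalty term1} are largely inherited from Lemma \ref{estimate of YZKK nm of BSDE} by lower semicontinuity, giving at once the bounds on $\sup_t\E[(Y^n_t)^2]$, $\E\int_0^T|Z^n|^2\,dt$, and $n^2\E[(\int_0^T(Y^n_s-U^{n-1}_s)^+\,ds)^2]$. The sup-in-time bound $\E[\sup_t(Y^n_t)^2]$ follows from BDG applied to \eqref{MRBSDE of coeffcient n}, while $\E[(K^{n,+}_T)^2]$ is obtained by solving \eqref{MRBSDE of coeffcient n} at $t=0$ for $K^{n,+}_T$ and bounding each summand by the previous estimates. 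The crux is that the upper-penalty bound $n^2\E[(\int_0^T(Y^n-U^{n-1})^+\,ds)^2]\le C$ is uniform in $n$; this was secured in Lemma \ref{estimate of YZKK nm of BSDE} via the stopping-time argument \eqref{boundness of penalty U}--\eqref{boundness of penalty L} based on the Mokobodski process $X$ of (A2)-(ii),(d), whose existence is $n$-independent. This is the decisive role of the adapted Mokobodski condition, and it is the main technical obstacle of the proof.
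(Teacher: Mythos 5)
Your overall strategy coincides with the paper's: fix $n$, use the monotonicity $Y^{n,m}\le Y^{n,m+1}$ and the uniform bounds of Lemma \ref{estimate of YZKK nm of BSDE} to define $Y^n$ as the increasing limit in $m$, pass to the limit in \eqref{MF-DRBSDE for nm}, and inherit the $n$-uniform estimates, with the penalty bound ultimately resting on the Mokobodski process $X$. Your reorganization of the final estimates (obtaining $\E[(K^{n,+}_T)^2]$ from the equation at $t=0$ and then $\E[\sup_t(Y^n_t)^2]$ by BDG, instead of the paper's sandwich between two auxiliary reflected BSDEs $\check Y_n$ and $\bar Y_n$) is a reasonable variant.

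The genuine gap is the sentence claiming that ``a monotone-limit argument for penalized BSDEs \dots upgrades this to $Y^{n,m}\to Y^n$ in $\cS^2_c$''. An increasing pointwise limit of continuous processes is in general only lower semicontinuous, and Peng-type monotone limit theorems deliver only an RCLL limit; moreover the lower obstacle $L^{n-1,m-1}$ here moves with $m$, so the classical continuity result for penalization against a fixed continuous obstacle does not apply either. Without continuity of $Y^n$ you cannot invoke Dini's theorem, so the claimed $\cS^2_c$ convergence --- on which everything downstream rests (the Cauchy property of $(Z^{n,m})_m$ in $\cH^{2,d}$ via It\^o's formula, the identification of $K^{n,+}$ as a continuous non-decreasing process, the passage to the limit in the Skorohod condition) --- is unsupported. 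The paper's Step~2 is devoted precisely to this point: it extracts weak $L^2$ limits of $K^{n,m,+}_\tau$ at stopping times via Uppman's theorem, uses the increment estimate \eqref{eq:1.132} (a consequence of the adapted Mokobodski condition) to show $\E[K^{n,+}_{\tau_k}-K^{n,+}_\theta]\to 0$ along monotone sequences of stopping times, and concludes continuity of $Y^n$ by the Dellacherie--Meyer criterion. This argument, or a substitute for it, is the main technical content of the proposition and must appear in your proof. A secondary, smaller gap: your limit passage in the Skorohod condition requires justifying the convergence of $\int_0^T(\cdot)\,dK^{n,m,+}_s$ against the varying measures $dK^{n,m,+}$; the paper does this with Helly's theorem, combining the $\cS^2_c$ convergence of the integrands with that of $K^{n,m,+}$.
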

\begin{proof}: Recall that for any $n,m\ge 0$, $(Y^{n,m},Z^{n,m})$ verify \eqref{MF-DRBSDE for nm}, i.e., for any $t\le T$,
\begin{equation}\label{MF-DRBSDEnm}\left\{
\begin{aligned}
&Y^{n,m}\in\cS^2_c,\ \ Z^{n,m}\in \cH^{2,d},\\
&Y_t^{n,m}= \xi +\int_t^T f(s,Y_s^{n,m},\E[Y_s^{n-1,m-1}],Z_s^{n,m})ds-n\int_t^T \left(Y_s^{n,m}-g(s,Y_s^{n-1,m-1},\E[Y_s^{n-1,m-1}])\right)^+ ds
\\&\qquad \qquad +K^{n,m,+}_T-K^{n,m,+}_t-\int_t^T Z_s^{n,m} dB_s.\\
\end{aligned}\right.
\end{equation}
On the other hand, we know that for any $t\le T$ and for any $n,m\ge 0$, $Y^{n,m}_t\le Y^{n,m+1}_t$. Therefore for any $n\geq 0$ and $\tt$, let us set
\begin{equation}\label{eqyn}
Y^n_t=\lim_{m\rw \ft}Y^{n,m}_t.
\end{equation}
As $Y^{n,m}$ is continuous adapted, therefore $Y^n$ is a predictable process which is moreover l.s.c. On the other hand by Fatou's Lemma and \eqref{roughestimateynm} we deduce that:
\begin{equation}\label{roughestimateyn}
\begin{aligned}
\sup_{0\leq t\leq T}\E[(Y^{n}_t)^2]\le C
\end{aligned}
\end{equation}
Next the proof will be divided into three steps.
\ms

\nd \underline{Step 1}: There exists a constant $C$ such that for any $n\ge 0$,
\begin{equation}\label{estimats of Yn uniform}
\begin{aligned}
\E[\sup_{0\leq t\leq T}(Y^{n}_t)^2]\le C.
\end{aligned}
\end{equation}

For $m\ge 0$, let
$(\check Y^{m}_n,\check Z_n^{m})$ be the solution of the following BSDE:
\begin{equation}\label{MF-DRBSDEom}\left\{
\begin{aligned}
&\check Y_n^{m}\in\cS^2_c,\ \ \check Z_n^{m}\in \cH^{2,d};\\
&\check Y^{m}_n(t)= |\xi|+|X_T| +\int_t^T f(s,Y_s^{n},\E[Y_s^{n-1}],\check Z^{m}_n(s))ds+m\int_t^T (\check Y^{m}_n(s)-X_s)^-ds-\int_t^T \check Z^{m}_n(s)dB_s,\,\, t\le T.\\
\end{aligned}\right.
\end{equation}
Now let $(\check Y_n,\check Z_n,\check K_n)$ be the solution of the standard lower obstacle reflected BSDE associated with \\$(f(s,Y_s^{n},\E[Y_s^{n-1}],z),|\xi|+|X_T| ,(X_t)_{t\le T})$, i.e.,

\begin{equation}\label{MF-DRBSDEcheck}\left\{
\begin{aligned}
&\check Y_n\in\cS^2_c,\ \ \check Z_n \in \cH^{2,d}, \mx{ ($\check K_n\in\cS^2_c$, non-decreasing and $\check K_n(0)=0$)};\\
&\check Y_n(t)= |\xi|+|X_T| +\int_t^T f(s,Y_s^{n},\E[Y_s^{n-1}],\check Z_n(s))ds+\check K_n(T)-\check K_n(t)-\int_t^T \check Z_n(s)dB_s,\,\, t\le T.\\
&\check Y_n(t)\ge X_t ,\,\,\forall t\le T \mbox{ and }\int_0^T(\check Y_n(t)- X_t)d\check K_n(t)=0.
\end{aligned}\right.
\end{equation}Then there exists a constants $C\ge 0$ such that
\begin{equation}\label{estimats of Yn uniform2}
\begin{aligned}
\E[\sup_{0\leq t\leq T}(\check Y^{n}_t)^2]\le C \E[( |\xi|+|X_T|)^2+\sup_{s\le T}|X_t|^2+\int_0^T|f(s,Y_s^{n},\E[Y_s^{n-1}],0)|^2ds]\le C.
\end{aligned}
\end{equation}
The second inequality stems from the facts that $X\in \cS^2$, $\xi\in L^2(d\p)$ and finally in taking into account of \eqref{roughestimateyn} and since $f$ is Lipschitz and $(f(t,\omega,0,0,0))_{t\le T}\in \cH^{2,1}$. Then by standard comparison results we have:
\begin{equation}\label{etminyn2}
Y^{n,0}\le Y^{n,m-1}\le Y^{n,m}\le \check Y_n^m\le \check Y_n,
\end{equation}
since $h(t,y,y')\le X_t$ for any $t,y,y'$ (to infer the third inequality).

Next let $(\bar Y_n,\bar Z_n,\bar K_n)$ be the solution of the following upper barrier reflected BSDE:
\begin{equation}\label{MF-DRBSDEcheck2}\left\{
\begin{aligned}
&\bar Y_n\in\cS^2_c,\ \ \bar Z_n \in \cH^{2,d}, \mx{ ($\bar K_n\in\cS^2_c$ non-decreasing and $\check K_n(0)=0$)};\\
&\bar Y_n(t)= -|\xi|-|X_T| -\int_t^T f(s,\check Y_n(s),\E[\check Y_{n-1}(s)],\bar Z_n(s))ds-\bar K_n(T)+\bar K_n(t)-\int_t^T \bar Z_n(s)dB_s,\,\, t\le T;\\
&\bar Y_n(t)\le X_t ,\,\,\forall t\le T \mbox{ and }\int_0^T(\bar Y_n(t)- X_t)d\bar K_n(t)=0.
\end{aligned}\right.
\end{equation}
As previously in \eqref{estimats of Yn uniform2} and using the same inequality, there exists a constant $C\ge 0$ which does not depend on $n$ such that:
\begin{equation}\label{estimats of Yn uniform1}
\begin{aligned}
\E[\sup_{0\leq t\leq T}(\bar Y_{n}(t)^2]\le C \E[(|\xi|+|X_T|)^2+\sup_{t\le T}|X_t|^2+\int_0^T|f(s,\check Y_n(s),\E[\check Y_{n-1}(s)],0)|^2ds]\le C.
\end{aligned}
\end{equation}
On the other hand thanks to $g(t,y,y')\ge X_t$ and the monotonicity property of $f$ w.r.t $y$ and $y'$, by using a standard comparison argument we deduce that
$\p-a.s.$, for any $t\le T$, $\bar Y_{n}(t)\le Y^{n,0}_t$. Going back now to \eqref{etminyn2}, in using
the estimates obtained in \eqref{estimats of Yn uniform2} and \eqref{estimats of Yn uniform1}, one deduces the existence of a constant $C$ which does not depend on $n$ such that the estimate \eqref{estimats of Yn uniform} is satisfied. Finally let us notice that we have also
\begin{equation}\lb{estapproxk-}
\E[ n^2\left(\int_0^T \left(Y_t^{n}-U^{n-1}_t\right)^+ dt\right)^2]\leq C,
\end{equation}
which is a consequence of the convergence of $(Y^{n,m})_{m\ge 0}$ to $Y^n$ and the estimate \eqref{eq11bis}.
\bs

\nd \underline{Step 2}: For any $n\ge 0$,  $Y^n$ is continuous.
\ms

\nd Recall that $(Y^{n,m},Z^{n,m})$ verify \eqref{MF-DRBSDEnm} and let $n$ be fixed. For any $m\ge 0$, by
\eqref{eq11bis}
\begin{equation}\lb{estproxkmn}
\E[\{\underbrace{m\int_0^T(Y^{n,m}_s-L_s^{n-1,m-1})^-ds}_{K^{n,m,+}_T}\}^2]\le C,
\end{equation}
where $C$ is a constant independent of $m$ (and also $n$). Therefore using a result by A.Uppman (\cite{AreUppman}, Théorème 1, pp. 289), there exist a subsequence $(m_l)_{l\ge 0}$ and an adapted non-decreasing process $K^{n,+}$ such that for any stopping time $\t$,
$$
K^{n,m_l,+}_\t\rightharpoonup K^{n,+}_\t \mx{ weakly in the sense of $\sigma(L^1,L^\infty)$-topology, as }l\rw \ft.
$$But the uniform estimate \eqref{estproxkmn} implies that
$$
K^{n,m_l,+}_\t\rightharpoonup K^{n,+}_\t \mx{ weakly in
$L^2(d\p)$ as }l\rw \ft.$$
Next using estimate \eqref{roughestimateynm} and since $g$ is of linear growth at most, one deduces the existence of a constant $C_n$ which may depend on $n$ such that$$
\E\Big \{\int_0^T\{| f(s,Y_s^{n,m},\E[Y_s^{n-1,m-1}],Z_s^{n,m})|^2+|n\left(Y_s^{n,m}-g(s,Y_s^{n-1,m-1},\E[Y_s^{n-1,m-1}])\right)^+|^2+|Z_s^{n,m}|^2\}ds\Big \}\le C_n.
$$
Therefore there exists a subsequence which we still denote by $(m_l)_{l\ge 0}$ such that
$$
(f(s,Y_s^{n,m_l},\E[Y_s^{n-1,m_l-1}],Z_s^{n,m_l}))_{s\le T}\rightharpoonup (\Phi_n(s))_{s\le T}\mx{ weakly in $\cH^{2,1}$ as $l\rw \ft$,}
$$
$$
(n(Y_s^{n,m_l}-g(s,Y_s^{n-1,m_l-1},\E[Y_s^{n-1,m_l-1}])^+)_{s\le T}\rightharpoonup (\Sigma_n(s))_{s\le T}\mx{ weakly in $\cH^{2,1}$ as $l\rw \ft$}
$$
and
$$
(Z_s^{n,m_l})_{s\le T}\rightharpoonup (Z_n(s))_{s\le T}\mx{ weakly in $\cH^{2,d}$ as $l\rw \ft$}.
$$
Going back now to \eqref{MRBSDE of coeffcient n} and considering the equation between $0$ and $\t$ with $m_l$ and finally sending $l$ to infinite, we deduce that for any stopping time $\t$ it holds:$$\p-a.s.,\,\,
Y_\t^{n}= Y^{n}_0-\int_0^\t \Phi_n(s)ds+
\int_0^\t \Sigma_n(s)ds-K_\t^{n,+}+\int_0^\t Z_n(s)dB_s.$$
Next let us consider $(\t_k)_{k\ge 0}$ a decreasing sequence of stopping times which converges to some $\theta$. As the filtration $(\cF_t)_{t\le T}$ satisfies the usual conditions (and then it is right continuous) then $\theta$ is also an $(\cF_t)_{t\le T}$-stopping time valued in $[0,T]$. On the other hand for any $k\ge 0$,
$$\p-a.s.,\,\,
Y_{\t_k}^{n}-Y_\theta^{n}= -\int_\theta^{\t_k} \Phi_n(s)ds+
\int_\theta^{\t_k}  \Sigma_n(s)ds-(K_{\t_k}^{n,+}-K_\theta^{n,+})+\int_\theta^{\t_k} Z_n(s)dB_s$$
which, in taking expectation, yields:
$$\E[
Y_{\t_k}^{n}-Y_\theta^{n}]= \E[-\int_\theta^{\t_k} \Phi_n(s)ds+
\int_\theta^{\t_k}  \Sigma_n(s)ds]-\E[(K_{\t_k}^{n,+}-K_\theta^{n,+})].$$
But the first term in the right-hand side converges to 0 as $k\rw \infty$. On the other hand by the weak limit and \eqref{eq:1.132} we have:
\begin{equation}
\begin{aligned}
\E[(K_{\t_k}^{n,+}-K_\theta^{n,+})]&=\lim_{l\rw \ft}\E[(K_{\t_k}^{n,m_l,+}-K_\theta^{n,m_l,+})]\le \limsup_{l\rw \ft}\E[(V_{\t_k}^+-V_\theta^+)+\int_{\theta}^{\t_k} |f(s,Y_s^{n,m_l},\E[Y_s^{n-1,m_l-1}],Z_s^{n,m_l})|ds]\\&\le
\E[(V_{\t_k}^+-V_\theta^+)]+\limsup_{l\rw \ft}\E[\int_{\theta}^{\t_k} |f(s,Y_s^{n,m_l},\E[Y_s^{n-1,m_l-1}],Z_s^{n,m_l})|ds].
\end{aligned}
\end{equation}
As the process
$( |f(s,Y_s^{n,m_l},\E[Y_s^{n-1,m_l-1}],Z_s^{n,m_l})|)_{s\le T}$ belongs uniformly to $\cH^{2,1}$, then \\$\lim_{k\rw \ft}\E[(K_{\t_k}^{n,+}-K_\theta^{n,+})]=0$ and thus $\lim_{k\rw \ft}\E[
Y_{\t_k}^{n}-Y_\theta^{n}]=0$. Now as $Y$ is a predictable (and then optional) process, $\tau$  and $\t_k$ are arbitrary, then, by a result by Dellacherie-Meyer (\cite{dlm}, pp.120, Theor\`eme 48) the process $Y^n$ is right continuous. In the same way if $(\t_k)_{k\ge 0}$ is an increasing sequence of stopping times which converges to a predictable stopping time then
$\E[
Y_{\t_k}^{n}-Y_\theta^{n}]=0$. As $Y^n$ is predictable then $Y^n$ is left continuous. Consequently $Y^n$ is continuous (see \cite{dlm}, pp.120, Th\'eor\`eme 48).
\ms

\nd \underline{Step 3}: There exist processes $(Y^n,Z^n,K^n)$ that satisfy \eqref{MRBSDE of coeffcient n}-\eqref{estimats of Yn Zn Kn and penalty term1}.

Actually as $Y^{n,m}\nearrow Y^n$ and those processes are continuous then thanks to Dini's theorem the convergence is uniform $\omega$ by $\omega$ $\p-a.s.$. Finally by the Lebesque dominated converge theorem and \eqref{estimats of Yn uniform} we have
$$
\lim_{m\rw \infty}\E[\sup_{t\le T}|Y^{n,m}_t-Y^n_t|^2]=0.
$$
Next going back to \eqref{MF-DRBSDEnm} and using It\^o's formula  with $(Y^{n,m}-Y^{n,q})^2$ and taking into account of \eqref{eq11bis}, we obtain:
$$
\E[\int_0^T|Z_s^{n,m}-Z^{n,q}_s|^2ds]\rw 0 \mx{ as $m,q\rw \ft$}.
$$
Thus let us denote by $Z^n$ the limit in $\cH^{2,d}$ of the sequence $(Z^{n,m})_{m\ge 0}$ which exists since it is of Cauchy type in this normed complete linear space. Then we have also the convergence of
$( f(s,Y_s^{n,m},\E[Y_s^{n-1,m-1}],Z_s^{n,m}))_{s\le T}$ and $n(Y_s^{n,m}-g(s,Y_s^{n-1,m-1},\E[Y_s^{n-1,m-1}]))^+)_{s\le T}$ in $\cH^{2,1}$ toward
$( f(s,Y_s^{n},\E[Y_s^{n-1}],Z_s^{n}))_{s\le T}$ and\\ $n(Y_s^{n}-g(s,Y_s^{n-1},\E[Y_s^{n-1}]))^+)_{s\le T}$ respectively. Next from \eqref{estimate of YZKK nm of BSDE}, in taking the limit w.r.t $m$, one deduces that
$$
\E[\int_0^T(Y^n_s-h(s,Y^n_s,\E[Y^{n-1}_s]))^-ds]=0
$$
which implies, by continuity, that for any $s\le T$, $Y_s\ge h(s,Y^n_s,\E[Y^{n-1}_s])$.
Finally for any $t\le T$, let us set
$$
K^{n,+}_t=Y^n_0-Y^n_t-\int_0^tf(s,Y_s^{n},\E[Y_s^{n-1}],Z_s^{n})ds+n\int_0^t(Y_s^{n}-U^{n-1}_s)^+ ds
+\int_0^t Z_s^{n} dB_s.
$$
Therefore $K^{n,+}$ is continuous and is nothing but the limit w.r.t $m$ in $\cS^2_c$ of $K^{n,m,+}$. As $K^{n,m,+}$ is non-decreasing then $K^{n,+}$ is also non-decreasing.
Finally let us notice that for any $m\ge 0$, we have
\begin{equation}
\begin{aligned}\forall s\le T,
Y^{n,m}_s\wedge L^{n-1,m-1}_s\leq Y^{n,m}_s
\end{aligned}
\end{equation}
and\begin{equation}
\begin{aligned}
\int_0^T (Y^{n,m}_s-Y^{n,m}_s\wedge L^{n-1,m-1}_s)d K^{n,m,+}_s=0.
\end{aligned}
\end{equation}
As $( (Y^{n,m}_s-Y^{n,m}_s\wedge L^{n-1,m-1}_s)_{s\le T})_{m\ge 0}$ and $ (K^{n,m,+})_{m\ge 0}$ converge in $\cS^2_c$ toward $ (Y^{n}_s-L^{n-1}_s)_{s\le T}$ and $K^{n,+}$ respectively (as $m\rwft$) then by Helly's Theorem (\cite{helly}, pp.370) we have:
$$
\int_0^T(Y^n_s-h(s,Y^n_s,\E[Y^{n-1}_s]))dK^{n,+}_s=0.
$$
It follows that $(Y^n,Z^n,K^{n,+})$ verify the reflected BSDE \eqref{MRBSDE of coeffcient n}. The remaining estimates of \eqref{estimats of Yn Zn Kn and penalty term1} stem from: (i) the convergence of the sequence $(Z^{n,m})_{m\ge 0}$ in $\cH^{2,d}$ and \eqref{L2 of Znm}; (ii) the above definition of $K^{n,+}$ in combination with \eqref{estapproxk-}  mainly.
\end{proof}
\par
To resume,  from \eqref{inequality of Ynm Yn+1m and Ynm+1}, we deduce that for any $n\ge 0$, $Y^{n+1}\le Y^n$. Then let us set
$$
\p-a.s.,\,\,\forall t\le T,\,\,Y_t=\lim_{n\rwft}Y^n_t.
$$
Note that by \eqref{estimats of Yn uniform}, for any stopping time $\t$, $(Y^n_\t)_{n\ge 0}$ converges toward $Y_\t$ in $L^1(d\p)$ . We are now ready to give the main result of this part.
\begin{theorem} There exist processes $Z$ and $K^\pm$ such that $(Y,Z,K^\pm)$ verify:
\begin{equation}\label{MF-DRBSDE2}\left\{
\begin{aligned}
&Y\in \mathcal{S}^2_c, Z\in \mathcal{H}^{2,d}, K^\pm\in \mathcal{S}^2_{ci};\\
&Y_t= \xi +\int_t^T f(s,Y_s,\E[Y_s],Z_s)ds+ K_T^+-K_t^+-(K_T^- -K_t^-)-\int_t^T Z_s dB_s,\,\,t\le T;\\
&h(t,Y_t,E[Y_t])\leq Y_t\leq g(t,Y_t,E[Y_t]),\ \  t\leq T;\\
&\int_0^T\left(Y_t-h(t,Y_t,E[Y_t])\right)dK_t^+=\int_0^T\left(g(t,Y_t,E[Y_t])-Y_t\right)dK_t^-=0.
\end{aligned}\right.
\end{equation}
\end{theorem}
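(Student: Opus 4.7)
The plan is to pass to the monotone limit in the sequence $(Y^n,Z^n,K^{n,+})_{n\ge 0}$ from Proposition \ref{Lemma: estimats of Yn Zn Kn and penalty term}, writing $K^{n,-}_t:=n\int_0^t(Y^n_s-U^{n-1}_s)^+\,ds$ so that \eqref{MRBSDE of coeffcient n} becomes a semimartingale with an upward push $K^{n,+}$ and a downward push $K^{n,-}$. Since $Y^{n+1}\le Y^n$ by comparison, the pointwise limit $Y_t:=\lim_n Y^n_t$ is well-defined and, thanks to \eqref{estimats of Yn uniform}, satisfies $\sup_n\E[\sup_{t\le T}|Y^n_t|^2]<\infty$. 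The mean-field coupling $\E[Y^{n-1}_s]$ will pass to $\E[Y_s]$ once $\sdc$-convergence of $Y^n$ to $Y$ is secured.

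The critical step is to obtain a uniform $L^2$-bound on $K^{n,+}_T+K^{n,-}_T$, for which the adapted Mokobodski condition (A2)-(ii),(d) is essential. The strategy is to apply It\^o's formula to $(Y^n_t-X_t)^2$ and exploit the sign structure: on the support of $dK^{n,+}$ one has $Y^n_s=h(s,Y^n_s,\E[Y^{n-1}_s])\le X_s$, so $(Y^n_s-X_s)\,dK^{n,+}_s\le 0$; symmetrically, whenever $(Y^n_s-U^{n-1}_s)^+>0$ one has $Y^n_s\ge g(s,Y^n_s,\E[Y^{n-1}_s])\ge X_s$, so $-(Y^n_s-X_s)\,dK^{n,-}_s\le 0$. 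Combining these cancellations with Young's inequality on the martingale increment generated by $Z^n-J$, the Lipschitz bound on $f$, the uniform estimate \eqref{estimats of Yn uniform} and the fact that $V^\pm\in\cS^2_{ci}$ yields $\E[(K^{n,+}_T)^2+(K^{n,-}_T)^2]\le C$ uniformly in $n$.

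Once this bound is in hand, I would proceed in parallel to Steps 2 and 3 of the proof of Proposition \ref{Lemma: estimats of Yn Zn Kn and penalty term}. The continuity of $Y$ follows from the Dellacherie--Meyer criterion applied to left/right sequences of stopping times, using the fact that the uniform $L^2$-control on $K^{n,\pm}$ makes the families $t\mapsto\E[K^{n,\pm}_t]$ equicontinuous in $n$. Dini's theorem then upgrades $Y^n\searrow Y$ to uniform convergence $\p$-a.s., and dominated convergence (with envelope $|Y^n|\le|Y^0|+|Y|$) gives $Y^n\to Y$ in $\sdc$. Applying It\^o to $(Y^n-Y^q)^2$ and exploiting the Skorokhod identities together with the monotonicity (A2)-(i),(b)-(c) and (A2)-(ii),(a) to control the cross terms $\int(Y^n-Y^q)\,d(K^{n,\pm}-K^{q,\pm})$, one deduces that $(Z^n)$ is Cauchy in $\cH^{2,d}$; denote its limit by $Z$. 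The difference $K^{n,+}-K^{n,-}$ then converges in $\sdc$, and weak $L^2$-compactness of non-decreasing bounded processes (A.~Uppman's theorem) allows one to extract $K^\pm\in\cS^2_{ci}$ with $K^+-K^-$ equal to this $\sdc$-limit.

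Passing to the limit in \eqref{MRBSDE of coeffcient n} yields the forward equation for $(Y,Z,K^+,K^-)$. The lower bound $h(t,Y_t,\E[Y_t])\le Y_t$ is inherited from $Y^n_t\ge h(t,Y^n_t,\E[Y^{n-1}_t])$ via Lipschitz continuity and $L^2$-convergence, while $Y_t\le g(t,Y_t,\E[Y_t])$ stems from the uniform bound on $K^{n,-}_T$, which forces $\E\int_0^T((Y_s-g(s,Y_s,\E[Y_s]))^+)^2\,ds=0$ by Fatou. The Skorokhod identities $\int_0^T(Y_s-h(s,Y_s,\E[Y_s]))\,dK^+_s=\int_0^T(g(s,Y_s,\E[Y_s])-Y_s)\,dK^-_s=0$ are obtained from the corresponding identities for $K^{n,\pm}$ via Helly's theorem, exactly as in Step 3 of the proof of Proposition \ref{Lemma: estimats of Yn Zn Kn and penalty term}. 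In my view, the main obstacle is the uniform $L^2$-bound on $K^{n,+}_T+K^{n,-}_T$: the mean-field coupling $\E[Y^{n-1}_s]$ inside the obstacles rules out the classical Mokobodski argument based on a direct comparison between $Y^n$ and reflected solutions, and the adaptedness of $X$ must absorb the entire task of separating the two reflections.
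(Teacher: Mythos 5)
Your overall strategy (monotone limit in $n$, uniform bounds on the reflecting terms, Uppman/weak compactness, Dellacherie--Meyer for continuity, Dini plus It\^o for the strong convergence of $Y^n$ and $Z^n$) matches the paper's up to the last step, but there are two points where your argument does not go through as written.

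First, a minor one: the uniform bound $\E[(K^{n,+}_T)^2]+\E[(K^{n,-}_T)^2]\le C$, which you identify as the main obstacle, is already part of the statement of Proposition \ref{Lemma: estimats of Yn Zn Kn and penalty term} (estimate \eqref{estimats of Yn Zn Kn and penalty term1}); it is obtained there through the interlacing stopping-time argument of Lemma \ref{estimate of YZKK nm of BSDE}, which isolates, between consecutive hitting times of the two obstacles, intervals on which only one of $K^{n,+}$, $K^{n,-}$ can increase. Your alternative derivation via It\^o applied to $(Y^n_t-X_t)^2$ does not replace this: the sign observations you make ensure that the terms $\int(Y^n_s-X_s)\,dK^{n,+}_s$ and $-\int(Y^n_s-X_s)\,dK^{n,-}_s$ are nonpositive, so they get \emph{discarded} from the inequality and you are left with a bound on $\E\int_0^T|Z^n_s-J_s|^2ds$ only; no control on $K^{n,+}_T$ and $K^{n,-}_T$ separately comes out of that computation. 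The separation of the two increasing processes genuinely requires the stopping-time decomposition (or an equivalent device), not just the sign structure under It\^o.

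Second, and more seriously: the Skorokhod conditions for the limit processes. In Step 3 of the proof of Proposition \ref{Lemma: estimats of Yn Zn Kn and penalty term}, Helly's theorem is applicable because the single increasing process $K^{n,m,+}$ is recovered \emph{strongly} (in $\cS^2_c$) from the equation, all other terms converging strongly. In the two-barrier limit $n\to\infty$ only the difference $K^{n,+}-K^{n,-}$ converges strongly; the individual processes $K^{n,\pm}_\tau$ converge only weakly in $L^2$ along a subsequence, and Helly's theorem as invoked in the paper requires pointwise (or strong) convergence of the integrating measures together with uniform convergence of the integrands. Passing to the limit in $\int_0^T(Y^n_s-L^{n-1}_s)\,dK^{n,+}_s=0$ and in the analogous identity for $K^{n,-}$ with only weak convergence of $K^{n,\pm}$ is exactly the step your proposal does not justify. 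The paper circumvents this entirely: it writes the Dynkin-game representation \eqref{Dykin Game of ynm} of $Y^n$, introduces the auxiliary doubly reflected BSDE \eqref{MF-DRBSDE3} with the \emph{frozen} obstacles $h(t,Y_t,\E[Y_t])$ and $g(t,Y_t,\E[Y_t])$ (solvable by the classical Mokobodski condition since $h\le X\le g$), proves $\E[\sup_t|Y^n_t-\bar Y_t|^2]\to 0$ by comparing the two game values, and then takes $K^\pm:=\bar K^\pm$ from that auxiliary equation, so the Skorokhod minimality comes for free. You either need to reproduce such an identification argument, or supply a genuine proof that the weak limits $K^\pm$ you extract satisfy the minimality conditions; as it stands, this is a gap.
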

\begin{proof}: For any $n\ge 0$, $(Y^n,Z^n,K^{n,+})$ verify \eqref{MRBSDE of coeffcient n}, i.e.,
\begin{equation}\label{MRBSDE of coeffcient n1}\left\{
\begin{aligned}
&Y_t^{n}= \xi +\int_t^T f(s,Y_s^{n},\E[Y_s^{n-1}],Z_s^{n})ds+ K_T^{n,+}-K_t^{n,+}-n\int_t^T(Y_s^{n}-U^{n-1}_s)^+ ds-\int_t^T Z_s^{n} dB_s,\,\,t\le T;\\
&Y^n_t\geq L^{n-1}_t,\ t\leq T, \  \ \text{and}\ \  \int_0^T(Y^n_t-L^{n-1}_t) dK^{n,+}_t=0.
\end{aligned}\right.
\end{equation}
First note that by \eqref{estimats of Yn Zn Kn and penalty term1} we have
\begin{equation}\label{eq147}
\begin{aligned}
\E\left[(K_T^{n,+})^2\right] +\E\left[ n^2\left(\int_0^T \left(Y_t^{n}-U^{n-1}_t\right)^+ dt\right)^2\right]\leq C
\end{aligned}
\end{equation}
where $C$ is constant independent of $n$. On the other hand by \eqref{eq:1.13} and \eqref{eq:1.132} for any stopping times $\h \le \t$ we have:
\begin{equation}\lb{eq153}
\begin{aligned}
\E[n\int_\h^\t \left(Y_s^{n}-g(s,Y_s^{n-1},\E[Y_s^{n-1 }])\right)^+ ds]
\leq \E[(V_\t^+-V_\h^+)]+\E[\int_{\h}^{\t} |f(s,Y_s^{n},\E[Y_s^{n-1}],Z_s^{n})|ds].
\end{aligned}
\end{equation}
 and
\begin{equation}\lb{eq154}
\E[K^{n,+}_\t-K^{n,+}_\h]\le
\E[(V_\t^--V_\h^-)]+\E[\int_{\h}^{\t} |f(s,Y_s^{n},\E[Y_s^{n-1}],Z_s^{n})|ds].
\end{equation}
Therefore once more by Uppman's result  (\cite{AreUppman}, Théorème 1, pp. 289) and \eqref{eq147}, there exists a subsequence $(n_l)_{l\ge 0}$ such that for any stopping time $\t$,
$$
K^{n_l,+}_\t\rightharpoonup K^{+}_\t \mx{ weakly in the sense of $\sigma(L^1,L^\infty)$-topology as }l\rw \ft
$$
and
$$
K^{n_l,-}_\t:=n\int_0^\t \left(Y_s^{n}-g(s,Y_s^{n-1},\E[Y_s^{n-1 }])\right)^+ ds\,\,\rightharpoonup K^{-}_\t \mx{ weakly in the sense of $\sigma(L^1,L^\infty)$-topology as }l\rw \ft ,
$$
where $K^\pm$ are adapted non-decreasing processes. Next using estimate \eqref{eq147} one deduces that the previous convergences hold also in $L^2(d\p)$ weakly and not only for the  $\sigma(L^1,L^\infty)$-topology. Next since by \eqref{estimats of Yn Zn Kn and penalty term1}, there exists a constant $C$ independent of $n_l$ such that
$$
\E[\int_0^T\{|f(s,Y^{n_l}_s,\E[Y^{n_l-1}_s],Z^{n_l}_s)|^2+|Z^{n_l}_s)|^2\}ds]\le C,
$$
then there exists a subsequence which we still denote by $(n_l)_{l\ge 0}$ such that
$((f(s,Y^{n_l}_s,\E[Y^{n_l-1}_s],Z^{n_l}_s))_{s\le T})_{l\ge 0}$ (resp. $((|f(s,Y^{n_l}_s,\E[Y^{n_l-1}_s],Z^{n_l}_s)|)_{s\le T})_{l\ge 0}$; resp. $(Z^{n_l})_{l\ge 0}$) converges weakly in $\cH^{2,1}$ (resp. $\cH^{2,1}$; resp. $\cH^{2,d}$) to some process
$(\Phi(s))_{s\le T}$ (resp. $\Xi$; resp. $Z$) as $l\rwft$. Therefore from \eqref{eq153} and \eqref{eq154} and by Fatou's Lemma we deduce that : For any stopping times $\h \le \t$,
\begin{equation}\lb{eqk+}
\E[K^{+}_\t-K^{  +}_\h]\le
\E[(V_\t^--V_\h^-)]+\E[\int_{\h}^{\t} \Xi (s)ds  ]
\end{equation}
and
\begin{equation}\lb{eqk-}
\E[K^{-}_\t-K^{ - }_\h]\le
\E[(V_\t^+-V_\h^+)]+\E[\int_{\h}^{\t} \Xi(s)ds  ].
\end{equation}
Next from the equation \eqref{MRBSDE of coeffcient n1} written forwardly, we deduce that for any stopping time $\t$ it holds:
\begin{equation}\lb{eqyt}
Y_\t=Y_0-\int_0^\t\Phi(s)ds-K^+_\t+K^-_\t+\int_0^\t Z_sdB_s.
\end{equation}
Now as the process $Y$ is predictable then if $(\t_k)_{k\ge 0}$ is a decreasing sequence of stopping times that converges to $\th$ then
$$
\lim_{n\rwft}\E[Y_{\t_k}-Y_\th]=0
$$
since $K^+$ and $K^-$ satisfy the inequalities in \eqref{eqk+} and \eqref{eqk-} respectively. Thus $Y$ is right continuous (\cite{dlm}, pp.120, Theor\`eme 48).

In the same way if $(\t_k)_{k\ge 0}$ is an increasing sequence of predictable stopping times that converges to  a predictable stopping time $\th$ then  $$
\lim_{n\rwft}\E[Y_{\t_k}-Y_\th]=0.
$$
Therefore, similarly, the predictable process $Y$ is left continuous and then continuous.

Now we resume as we did in the proof of Proposition \ref{Lemma: estimats of Yn Zn Kn and penalty term}. Thanks to Dini's Theorem and dominated convergence Theorem, the convergence of the sequence $(Y^n)_{n\ge 0}$ to $Y$ holds in $\cS^2_c$ and by It\^o's formula applied with $(Y^n-Y^q)^2$ we obtain that the sequence of processes $(Z^n)_{n\ge 0}$ is of Cauchy type in $\cH^{2,d}$ and then converges to a process $Z$ which belongs to
$\cH^{2,d}$. Now from the inequality \eqref{estimats of Yn Zn Kn and penalty term1} we deduce that
$$
\E[\int_0^T(Y^n_s-g(s,Y^{n-1}_s,\E[Y^{n-1}_s]))^+ds\leq Cn^{-1}.
$$
Sending $n$ to $+\infty$ and using the uniform convergence of $(Y^n)_n$ and continuity of $Y$ and
$(g(t,Y_t,\E[Y_t]))_{t\le T}$ to deduce that $Y_t\le g(t,Y_t,\E[Y_t])$ for any $t\le T$. Thus for any $t\leq T$,
$$
h(t,Y_t,\E[Y_t])\le Y_t\le g(t,Y_t,\E[Y_t])
$$
since $Y^n\ge L^{n-1}$. Next the following properties hold true:
\begin{equation}
\begin{aligned}\forall s\le T,
Y^{n}_s\vee U^{n-1}_s\geq Y^{n}_s \mx{ and }\int_0^T (Y^{n}_s\vee U^{n-1}_s-Y^{n}_s )d K^{n,-}_s=0.
\end{aligned}
\end{equation}
Combining this with the backward equation \eqref{MRBSDE of coeffcient n1} that $Y^n$ verifies and based on already known results on double barrier reflected BSDEs and zero-sum Dynkin games (see \cites{ck96,hl00} for more details), one deduces that for any $t\le T$,
\begin{equation}\label{Dykin Game of ynm}
\begin{aligned}
Y^{n}_t=& \esssup_{\tau\geq t}\essinf_{\sigma \ge t}\E\left[ \int_t^{\tau\wedge \s} f(s,Y^{n  }_s,\E[Y^{n-1    }_s],Z^{n  }_s)ds+\right.\\ &\left.\left.
\qquad \qquad Y^{n}_\s\vee g(s,Y^{n-1}_s,\E[Y^{n-1}_s])_{|s=\s}\mathbb{1}_{\{\s<\t\}} + (L^{n-1}_s)_{|s=\t}\mathbb{1}_{\{\tau\le \s, \t<T\}}
+\xi\mathbb{1}_{\{\tau=\sigma=T\}}\right|\mathcal{F}_t\right].
\end{aligned}
\end{equation}
So let now $\bar Y$ be the process defined as follows: For any $t\le T$,
\begin{equation}\label{Dykin Game of y}
\begin{aligned}
\bar Y_t=& \esssup_{\tau\geq t}\essinf_{\sigma \ge t}\E\left[ \int_t^{\tau\wedge \s} f(s,Y_s,\E[Y_s],Z_s)ds+\right.\\ &\left.\left.
\qquad \qquad g(s,Y_s,\E[Y_s])_{|s=\s}\mathbb{1}_{\{\s<\t\}} + h(s,Y_s,\E[Y_s])_{|s=\t}\mathbb{1}_{\{\tau\le \s, \t<T\}}
+\xi\mathbb{1}_{\{\tau=\sigma=T\}}\right|\mathcal{F}_t\right].
\end{aligned}
\end{equation}
The process $\bar Y$ is related to double barrier reflected BSDEs in the following way: There exist processes
$\bar Z\in \cH^{2,d}$ and non-decreasing continuous processes $K^\pm\in \cS^2$ $(K^\pm=0)$ such that
\begin{equation}\label{MF-DRBSDE3}\left\{
\begin{aligned}
&\bar Y_t= \xi +\int_t^T f(s,Y_s,\E[Y_s],Z_s)ds+ \bar K_T^+-\bar K_t^+-(\bar K_T^- -\bar K_t^-)-\int_t^T \bar Z_s dB_s,\,t\le T;\\
&h(t,Y_t,\E[Y_t])\leq \bar Y_t\leq g(t,Y_t,\E[Y_t]),\  t\leq T;\\
&\int_0^T\left(\bar Y_t-h(t,Y_t,\E[Y_t])\right)d\bar K_t^+=\int_0^T\left(g(t,Y_t,\E[Y_t])-\bar Y_t\right)d\bar K_t^-=0.
\end{aligned}\right.
\end{equation}
This is mainly due to the facts that Mokobodski's condition is satisfied, i.e, for any $t\le T$, $h(t,Y_t,\E[Y_t])\le X_t\le g(t,Y_t,\E[Y_t])$ and all those processes belong to $\cS^2_c$ (one can see e.g. \cites{bhm, ck96,hl00}). Next since $$|\underset{\tau \ge
t}{\esssup}\,\underset{\s \ge
t}{\essinf}\,\Sigma^1_{t,\tau,\s}-\underset{\tau \ge
t}{\esssup}\,\underset{\s \ge
t}{\essinf}\,\Sigma^2_{t,\tau,\s}|\le \underset{\tau \ge
t}{\esssup}\,\underset{\s \ge
t}{\esssup}\,|\Sigma^1_{t,\tau,\s}-\Sigma^2_{t,\tau,\s}|,$$ then for any $t\le T$,

\begin{align}
|Y^n_t&-\bar Y_t|\le  \E\{\int_0^T|f(s,Y_s,\E[Y_s],Z_s)-f(s,Y^{n}_s,\E[Y^{n-1}_s],Z^{n}_s)|ds+\nn\\&
\sup_{s\le T}|
Y^{n}_s\vee g(s,Y^{n-1}_s,\E[Y^{n-1}_s])-g(s,Y_s,\E[Y_s])|+\sup_{s\le T}|h(s,Y_s,\E[Y_s])-h(s,Y^{n-1}_s,\E[Y^{n-1}_s])||\mathcal{F}_t\}.\nn
\end{align}
Then by Doob's inequality (\cite{revuzyor}, pp.54), there exists a constant $C\ge 0$ such that:
\begin{align}
&\E[\sup_{t\le T}|Y^n_t-\bar Y_t|^2]\nn\\&\le C\Big \{\E\{\sup_{s\le T}|
Y^{n}_s\vee g(s,Y^{n-1}_s,\E[Y^{n-1}_s])-g(s,Y_s,\E[Y_s])|^2\}+\E\{\sup_{s\le T}|h(s,Y_s,\E[Y_s])-h(s,Y^{n-1}_s,\E[Y^{n-1}_s])|^2\}
\nn \\&
\qquad \qquad \qquad +\E\{\int_0^T|f(s,Y_s,\E[Y_s],Z_s)-f(s,Y^{n  }_s,\E[Y^{n-1}_s],Z^{n  }_s)|^2ds\}\Big \}=\ca_1^n+\ca_2^n+\ca_3^n \nn.
\end{align}
But $\lim_{n\rwft}\ca_3^n=0$ since $(Y^n)_n$ converges  to $Y$ in $\cS^2_c$ and $(Z^n)_n$ converges to $Z$ in $\cH^{2,d}$ and $f$ is uniformly Lipschitz w.r.t $(y,y',z)$. Next, $\lim_{n\rwft}\ca_2^n=0$ by Dini's Theorem and Lebesgue dominated convergence one since $h$ is monotone and $h(t,Y_t,\E[Y_t])$ is continuous w.r.t $t$. Finally using the same arguments one deduces that $\lim_{n\rwft}\ca_1^n=0$ as for any $t\le T$, $Y_t\le g(t,Y_t,\E[Y_t])$. Thus
$$
\lim_{n\rwft}\E[\sup_{t\le T}|Y^n_t-\bar Y_t|^2]=0
$$
and then $Y=\bar Y$. Next by \eqref{eqyt} and \eqref{MF-DRBSDE3} one deduces that $Z=\bar Z$. Therefore $(Y,Z,\bar K^\pm)$ verify \eqref{MF-DRBSDE2}, i.e., it is a solution of the mean-field reflected BSDE associated with $(f,\xi,h,g)$.
\end{proof}
As a by-product we obtain the following result related to the approximation of the solution of \eqref{MF-DRBSDE}. Its proof is based on uniqueness of the solution of that equation. \begin{corollary}Assume that the mean-field RBSDE \eqref{MF-DRBSDE} has a unique solution $(Y,Z,K^\pm)$. Then
$$
\lim_{n\rwft}\E[\sup_{t\le T}|Y^n_t-Y_t|^2]=0.
$$
where for any $n\ge 0$, $Y^n$ is the limit in $\cS_c^2$ w.r.t $m$ of $Y^{n,m}$ solution of \eqref{MF-DRBSDE for nm} and which satisfies also \eqref{MRBSDE of coeffcient n}.
\end{corollary}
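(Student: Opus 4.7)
The plan is to exploit the monotonicity of the approximating sequence together with the uniqueness hypothesis. Recall from \eqref{inequality of Ynm Yn+1m and Ynm+1} that $Y^{n+1}\le Y^n$ for every $n\ge 0$, so the pointwise limit $Y^\infty_t:=\lim_{n\rwft}Y^n_t$ is well defined, and the preceding theorem already establishes that this limit is continuous and, together with suitable processes $Z$ and $K^\pm$, solves the mean-field doubly reflected BSDE \eqref{MF-DRBSDE}. Invoking the uniqueness assumption then forces $Y^\infty=Y$ indistinguishably, which identifies the decreasing limit with the unique solution.

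Once this identification is in hand, I would argue the convergence in $\cS^2_c$ in two steps. First, I would upgrade the pointwise convergence to uniform convergence in $t$, $\p$-a.s. Because $(Y^n_t)_{n\ge 0}$ is a decreasing sequence of continuous processes whose limit $Y$ is itself continuous on the compact interval $[0,T]$, Dini's theorem applies $\omega$ by $\omega$, yielding
\[
\sup_{t\le T}|Y^n_t-Y_t|\xrightarrow[n\rwft]{} 0 \qquad \p\text{-a.s.}
\]
Second, I would pass from the almost sure uniform convergence to $L^2$ convergence through a domination argument: since $Y^n\le Y^0$ and the limit $Y$ sits between $-\bar Y_0$ (cf.\ \eqref{MF-DRBSDEcheck2}) and $Y^0$ (cf.\ \eqref{estimats of Yn uniform}), one has the uniform bound $\sup_{t\le T}|Y^n_t-Y_t|^2\le C\bigl(\sup_{t\le T}|Y^0_t|^2+\sup_{t\le T}|\bar Y_0(t)|^2+\sup_{t\le T}|Y_t|^2\bigr)$, and all three dominating terms lie in $L^1(d\p)$ thanks to \eqref{estimats of Yn uniform}, \eqref{estimats of Yn uniform1} and $Y\in\cS^2_c$. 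Lebesgue's dominated convergence theorem then delivers $\E[\sup_{t\le T}|Y^n_t-Y_t|^2]\to 0$, which is the claim.

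The only place where one must be slightly careful is the identification step: strictly speaking the theorem constructs the solution along subsequences of $(n_l)$ when extracting weak limits of $K^{n,\pm}$, $Z^n$ and $f(\cdot,Y^n,\E[Y^{n-1}],Z^n)$, but the $Y$-component of that construction is precisely the monotone pointwise limit $Y^\infty$, so no subsequence is needed for the $Y$-convergence itself. Uniqueness closes the loop by ruling out the a priori possibility that different subsequential limits of the auxiliary processes could yield different $Y$'s; here this is automatic since $Y^\infty$ is already determined before the extraction. This makes the proof essentially a bookkeeping exercise once the preceding theorem is invoked, with no genuine analytic obstacle beyond Dini plus dominated convergence.
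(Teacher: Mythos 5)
Your proposal is correct and follows essentially the same route as the paper: the preceding theorem already establishes (via Dini's theorem and dominated convergence) that the decreasing sequence $(Y^n)_{n\ge 0}$ converges in $\cS^2_c$ to a process solving \eqref{MF-DRBSDE}, and the uniqueness hypothesis identifies that limit with $Y$. Your additional remark that the monotone limit of the $Y^n$ is determined before any subsequence extraction for the auxiliary processes $Z^n$, $K^{n,\pm}$ is a correct and worthwhile clarification, but it does not change the argument.
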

\begin{remark}The solution of \eqref{MF-DRBSDE for nm} is unique when:

(i) $g$ and $h$ verify moreover \eqref{lipgh} and \eqref{fix_uniq1} with $p=2$.

(ii) $f$ does not depend on $z$ or
$f(t,\omega,y,y',z)=\Phi(t,\omega,y,y')+\alpha_tz$ where $(\alpha_t)_{t\le T}$ is, e.g., an adapted bounded process.
\ms

\noindent In this latter case of $f$, uniqueness is obtained after a change of probability by using Girsanov's theorem and arguing mainly as in Section 3.1.
\end{remark}

\end{document}